\documentclass[hidelinks,11pt]{amsart}

\usepackage{amsmath,amssymb,amsfonts,hyperref,graphicx,tikz,mathrsfs,amsaddr,blindtext}

\usepackage[labelsep=period, font=footnotesize, labelfont=bf]{caption}

\makeatletter
\@namedef{subjclassname@2020}{2020 Mathematics Subject Classification}
\makeatother

\pagestyle{plain}
\usepackage[margin=1in,footskip=0.25in]{geometry}

\theoremstyle{plain}
\newtheorem{theorem}{Theorem}[section]
\newtheorem{lemma}[theorem]{Lemma}
\newtheorem{corollary}[theorem]{Corollary}

\theoremstyle{definition}

\newtheorem{remark}[theorem]{Remark}

\numberwithin{equation}{section}

\newcommand{\bmi}[1]{\mbox{\boldmath $ #1$}}

\newsavebox\ideabox
\newenvironment{idea}
{\begin{equation}
\begin{lrbox}{\ideabox}
\begin{minipage}{\dimexpr\columnwidth-2\leftmargini}
\setlength{\leftmargini}{0pt}
\begin{quote}}
{\end{quote}
\end{minipage}
\end{lrbox}\makebox[0pt]{\usebox{\ideabox}}
\end{equation}}

\begin{document}

\title[]{On the location of  zeros of  the   Laplacian matching polynomials of graphs}

\author[]{Jiang-Chao Wan, Yi Wang, Ali Mohammadian}
\address{\rm{\normalsize{School of Mathematical Sciences, Anhui University, Hefei 230601, Anhui, China}}}
\thanks{{Email adress:}wanjc@stu.ahu.edu.cn (J.-C. Wan),  wangy@ahu.edu.cn (Y. Wang, corresponding author), ali\_m@ahu.edu.cn (A. Mohammadian)}
\thanks{{\it Funding.} The research of the second author is supported by the National Natural Science Foundation of China with  grant numbers 11771016 and  11871073. The research of the third author is supported by the Natural Science Foundation of Anhui Province  with  grant number 2008085MA03.}

\subjclass[2020]{Primary: 05C31, 05C70. Secondary:  05C05, 05C50,  12D10}

\keywords{Graph polynomial, Matching,    Subdivision of  graphs, Zeros of  polynomials}

\sloppy

\maketitle

\begin{abstract}
The   Laplacian matching polynomial of a graph $G$, denoted by $\mathscr{L\hspace{-0.7mm}M}(G,x)$, is a new graph polynomial whose all roots  are nonnegative real numbers.
In this paper,  we  investigate  the location of zeros of  the   Laplacian matching polynomials.
Let $G$ be a connected graph.
We show    that $0$ is a root of   $\mathscr{L\hspace{-0.7mm}M}(G, x)$ if and only if $G$ is a tree. We prove  that  the number of distinct positive zeros of $\mathscr{L\hspace{-0.7mm}M}(G,x)$ is at least equal to the length of the longest path in $G$.
It is also established   that the zeros of $\mathscr{L\hspace{-0.7mm}M}(G,x)$ and $\mathscr{L\hspace{-0.7mm}M}(G-e,x)$  interlace for each  edge $e$ of $G$.
Using the path-tree of $G$,
we present a linear algebraic approach to investigate  the largest zero of $\mathscr{L\hspace{-0.7mm}M}(G,x)$
and particularly to give   tight  upper and     lower   bounds   on it.
\end{abstract}

\section{Introduction}\label{sec1}

The graph polynomials, such as the characteristic polynomial, the chromatic polynomial, the independence polynomial, the matching polynomial,  and many others, are widely studied and play   important roles  in applications of graphs   in several diverse fields.
The  location of  zeros of  graph polynomials is a main  topic in algebraic combinatorics and can be used to describe some structures and parameters of graphs.
In this paper,  we focus on the location of zeros of  the Laplacian matching polynomials of graphs.
For more results on the location of zeros  of graph polynomials,  we refer to   \cite{Makowskya}.

Throughout this paper, all graphs are assumed to be finite, undirected, and without loops or multiple edges.
Let $G$ be a graph.
We    denote the vertex set of $G$   by $V(G)$ and the edge set of $G$ by  $E(G)$.
Let $M$ be a subset of $E(G)$. We  denote by  $V(M)$ the set of vertices of $G$ each of which is an endpoint of one of the edges in $M$.
If no two distinct edges in $M$ share a common endpoint, then $M$ is called a  {\it   matching} of $G$.
The set of matchings of $G$ is denoted by $\mathcal{M}(G)$.
A matching $M\in\mathcal{M}(G)$   is said to be {\it  perfect} if    $V(M)=V(G)$.
The {\it   matching polynomial} of $G$ is
$$\mathscr{M}(G,x)=\sum_{M\in\mathcal{M}(G)}(-1)^{|M|}x^{|V(G)\setminus V(M)|}$$
which was     formally defined  by Heilmann and Lieb   \cite{Heilmann}   in studying statistical physics, although it has  appeared independently in several different
contexts.

The matching polynomial is a fascinating mathematical object    and attracts considerable attention of researchers.
For an instance, by studying the multiplicity of zeros of the matching polynomials,
Chen and Ku  \cite{Ku} gave a   generalization of  the   Gallai--Edmonds   theorem which   is a structure theorem in classical     graph theory.
For another  instance,  using  a well known  upper bound on   zeros of the matching polynomials,
Marcus,   Spielman, and  Srivastava \cite{Marcus}    established that    infinitely  many   bipartite   Ramanujan graphs exist.
Some earlier facts  on the matching polynomials  can be found     in \cite{Godsil}.

We want to  summarize here  some  basic  features    of  the zeros  of the matching polynomial.
For this,  let us first  introduce some more notations and terminology which we need.
For a vertex $v$ of a graph  $G$,
we  denote by  $N_G(v)$   the set of all vertices of $G$   adjacent to $v$. The {\it   degree} of $v$ is defined as $|N_G(v)|$ and is   denoted by $d_G(v)$.
The maximum degree  and the minimum degree of the vertices of $G$ are  denoted by $\mathnormal{\Delta}(G)$ and $\delta(G)$, respectively.
For a subset $W$ of $V(G)$, we shall use $G[W]$ to denote the induced subgraph of $G$ induced by $W$  and we simply use
$G-W$ instead of  $G[V(G)\setminus W]$.
Also, for  a vertex $v$ of $G$, we simply  write $G-v$ for $G-\{v\}$.
For an edge  $e$ of $G$, we  denote  by  $G-e$   the  subgraph of $G$  obtained  by deleting the edge $e$.

Let $\alpha_1\leq\cdots\leq\alpha_n$ and $\beta_1\leq\cdots\leq\beta_m$
be respectively the zeros   of two real rooted polynomials $f$ and $g$ with $\deg f=n$ and $\deg g=m$.
We say  that the zeros of $f$ and $g$  {\it   interlace} if   either
$$\alpha_1\leq\beta_1\leq\alpha_2\leq\beta_2\leq\cdots$$
or
$$\beta_1\leq\alpha_1\leq\beta_2\leq\alpha_2\leq\cdots$$
in which case one clearly must have $|n-m|\leq1$.
We adopt the convention that the zeros of any  polynomial  of degree   $0$   interlace the zeros of  any other  polynomial.

For   any  connected graph $G$, the assertions given in  \eqref{thmmatching1}--\eqref{thmmatching2} are  known.
\begin{idea}\label{thmmatching1}
All the  roots  of $\mathscr{M}(G,x)$ are   real. Moreover, if $\mathnormal{\Delta}(G)\geq 2$, then the zeros of $\mathscr{M}(G,x)$ lie in the interval $(-2\sqrt{\mathnormal{\Delta}(G)-1}, 2\sqrt{\mathnormal{\Delta}(G)-1})$   \cite{Heilmann}.
\end{idea}
\begin{idea}\label{thmmatching3}
The number of distinct roots  of $\mathscr{M}(G,x)$ is at least equal to $\ell(G)+1$, where $\ell(G)$ is  the length of the longest path in $G$ \cite{Godsil2}.
\end{idea}
\begin{idea}\label{thmmatching2}
For each  vertex $v\in V(G)$, the zeros of $\mathscr{M}(G-v,x)$ interlace the zeros of  $\mathscr{M}(G,x)$. In addition,    the largest zero of $\mathscr{M}(G,x)$ has the multiplicity $1$ and is greater than the largest zero of $\mathscr{M}(G-v, x)$  \cite{Godsil1}.
\end{idea}

Recently, Mohammadian \cite{Ali} introduced a new graph polynomial  that is    called the {\it   Laplacian matching polynomial} and  is defined for  a  graph $G$ as
\begin{equation}\label{defali}\mathscr{L\hspace{-0.7mm}M}(G, x)=\sum_{M\in\mathcal{M}(G)}(-1)^{|M|}\left(\prod_{v\in V(G)\setminus V(M)}\big(x-d_G(v)\big)\right).\end{equation}
Mohammadian  proved  that all roots  of $\mathscr{L\hspace{-0.7mm}M}(G,x)$ are real and nonnegative, and moreover,  if $\mathnormal{\Delta}(G)\geq2$,  then the zeros of
$\mathscr{L\hspace{-0.7mm}M}(G,x)$ lie in the interval $[0, \mathnormal{\Delta}(G)+2\sqrt{\mathnormal{\Delta}(G)-1})$.
By observing this interval, it  is   natural to ask: What is the sufficient and necessary condition for $0$  is a root of $\mathscr{L\hspace{-0.7mm}M}(G,x)$?
More generally, as a new real rooted graph   polynomial, it is natural to investigate the properties of  zeros
such as the interlacing of zeros, the upper and lower bounds of the  largest zero, the maximum multiplicity of   zeros, and the number of distinct zeros.
In this paper, we mainly  prove that the assertions given in  \eqref{Lmatchingthm1}--\eqref{Lmatchingthm3}  hold for   any  connected graph $G$, letting  $\ell(G)$ be   the length of the longest path in $G$.
\begin{idea}\label{Lmatchingthm1}
If $\mathnormal{\Delta}(G)\geq 2$, then the zeros of $\mathscr{L\hspace{-0.7mm}M}(G,x)$ are contained  in
the interval $[0, \mathnormal{\Delta}(G)+2\sqrt{\mathnormal{\Delta}(G)-1}\cos\tfrac{\pi}{2\ell(G)+2}]$,   and  in addition,  the upper bound of the   interval is a zero of $\mathscr{L\hspace{-0.7mm}M}(G, x)$ if and only if $G$ is a   cycle.
\end{idea}
\begin{idea}\label{Lmatchingthm4}
The number of distinct  positive roots   of $\mathscr{L\hspace{-0.7mm}M}(G,x)$ is at least equal to $\ell(G)$. Also, if $\delta(G)\geq2$, then  $\mathscr{L\hspace{-0.7mm}M}(G, x)$ has   at least  $\ell(G)+1$  distinct  positive roots.
\end{idea}
\begin{idea}\label{Lmatchingthm3}
For each edge $e\in E(G)$,  the zeros of  $\mathscr{L\hspace{-0.7mm}M}(G,x)$ and $\mathscr{L\hspace{-0.7mm}M}(G-e,x)$  interlace  in the sense that,  if    $\alpha_1\leq\cdots\leq\alpha_n$ and
$\beta_1\leq\cdots\leq\beta_n$ are respectively  the zeros  of $\mathscr{L\hspace{-0.7mm}M}(G,x)$ and $\mathscr{L\hspace{-0.7mm}M}(G-e,x)$ in which     $n=|V(G)|$,  then $\beta_1\leq\alpha_1\leq\beta_2\leq\alpha_2\leq\cdots\leq\beta_n\leq\alpha_n$. Further, the largest zero of $\mathscr{L\hspace{-0.7mm}M}(G,x)$  has the  multiplicity $1$  and is strictly  greater than the largest zero of $\mathscr{L\hspace{-0.7mm}M}(H,x)$ for any proper subgraph $H$ of $G$.
\end{idea}

It should be mentioned that the Laplacian matching polynomial is  recently  studied  under a  different name and
expression   by   Chen and Zhang  \cite{Zhang}.

For a graph $G$, the  {\it   subdivision} of $G$,  denoted by $S(G)$, is   the graph  derived from $G$ by replacing every  edge $e=\{a, b\}$ of    $G$  with  two edges $\{a, \upsilon_e\}$ and $\{\upsilon_e, b\}$  along with the new vertex   $\upsilon_e$ corresponding to the edge $e$. We know from a result of Yan and  Yeh  \cite{Yan}  that
\begin{equation}\label{subx}\mathscr{M}\big(S(G), x\big)=x^{|E(G)|-|V(G)|}\mathscr{L\hspace{-0.7mm}M}(G, x^2)\end{equation}
for any graph $G$, which is also proved by  Chen and Zhang  \cite{Zhang} by different method.
The equality  \eqref{subx} shows that the problem of the   location of zeros   of the Laplacian matching polynomial of a   graph $G$ can be  transformed into the problem that deals with the   location of zeros   of the matching polynomial of $S(G)$.
For an instance,  using   \eqref{subx}  and  the first   statement in   \eqref{thmmatching1}, it  immediately follows   that the zeros of $\mathscr{L\hspace{-0.7mm}M}(G, x)$ are nonnegative real numbers.
The assertion  \eqref{Lmatchingthm4}
is    proved in  Section  \ref{sec2} by    the    subdivision of graphs.

One of the most important   tools  in the theory of the   matching polynomial   is  the concept of   `path-tree'  which is  introduced  by Godsil \cite{Godsil2}.
Given a    graph $G$ and a vertex $u\in V(G)$,
the {\it   path-tree   $T(G, u)$} is the tree which has as vertices the paths in $G$  which start
at $u$  where two such paths
are adjacent if one is a maximal proper subpath of the other.
In Section  \ref{sec3}, we show that the path-tree   is   also applicable  for the Laplacian matching polynomial by making some  appropriate adjustments.
Using this,  we   prove    \eqref{Lmatchingthm1}
which is a slight improvement of the second statement  of Theorem 2.6 of    \cite{Ali}.
The assertion  \eqref{Lmatchingthm3} is      proved in  Section  \ref{sec3} by     linear algebra arguments.

Let us introduce more notations and definitions  before moving on to the next section.
We  use $\lambda(f(x))$ to denote the largest zero   of a real rooted polynomial $f(x)$.
For a square matrix  $M$, we shall use $\varphi(M, x)$ to denote the characteristic polynomial of $M$ in the indeterminate  $x$.
If all the roots  of $\varphi(M, x)$ are real, then its largest zero is denoted by $\lambda(M)$.
For a graph $G$, the  {\it  adjacency matrix} of  $G$, denoted by $A(G)$, is a matrix whose rows and columns are  indexed by $V(G)$ and the $(u, v)$-entry is $1$ if  $u$ and $v$ are adjacent  and $0$ otherwise.  Let $D(G)$ be  the diagonal matrix whose rows and columns are  indexed as the rows and the  columns of $A(G)$ with  $d_G(v)$ in  the $v$th diagonal position. The matrices  $L(G)=D(G)-A(G)$ and  $Q(G)=D(G)+A(G)$  are respectively   said to be  the {\it  Laplacian matrix} and the {\it  signless Laplacian matrix}  of $G$.
It is   known    that $\mathscr{M}(G, x)=\varphi(A(G), x)$    if and only if $G$ is a forest \cite{sac}.
In addition, it is proved  that  $\mathscr{L\hspace{-0.7mm}M}(G,x)=\varphi(L(G), x)$ if and only if $G$ is a forest   \cite{Ali}.
Among other results, we present a generalization of these results   in Section \ref{sec2}.

\section{Subdivision of graphs and the Laplacian matching polynomial}\label{sec2}

In this   section,  we   examine  the  location of zeros of  the Laplacian matching polynomial by  establishing
a relation between the Laplacian matching polynomial of a graph and the matching polynomial of the  subdivision of that  graph.
Then, by analysing the structures of the subdivision of graphs, we will prove   \eqref{Lmatchingthm4}.
To begin with, we recall  the multivariate matching polynomial that covers both  the matching polynomial and the Laplacian matching polynomial.
This multivariate
graph polynomial was introduced by Heilmann and Lieb \cite{Heilmann}.

Let $G$ be a graph and associate  the  vector    $\bmi{x}_G=(x_v)_{v\in V(G)}$ with  $G$ in which  $x_v$ is an  indeterminate    corresponding to   the vertex    $v\in V(G)$.
Notice  that, for a    subgraph $H$ of $G$,    $\bmi{x}_H$ is the vector that has the same coordinate as   $\bmi{x}_G$ in the positions corresponding  to the vertices   in $V(H)$.
The {\it   multivariate matching polynomial} of $G$  is defined as
\begin{equation}\label{defx}\mathfrak{M}(G, \bmi{x}_G)=\sum_{M\in\mathcal{M}(G)}(-1)^{|M|}\left(\prod_{v\in V(G)\setminus V(M)}x_v\right).\end{equation}
Let  $\bmi{1}_G$ be the all one  vector of length    $|V(G)|$.
Also, for  a   subgraph  $H$ of $G$, we let
$\bmi{d}_{G, H}=(d_G(v))_{v\in V(H)}$.
For simplicity, we write $\bmi{d}_G$ instead of $\bmi{d}_{G, G}$.
We sometimes  drop the subscript  of the vector symbols if there is no possible confusion.
It is easy to see that
\begin{equation}\label{1x}\mathfrak{M}(G, x\bmi{1}_G)=\mathscr{M}(G,x)\end{equation}
and
\begin{equation}\label{lapx}\mathfrak{M}(G, x\bmi{1}_G-\bmi{d}_G\big)=\mathscr{L\hspace{-0.7mm}M}(G,x).\end{equation}
Note that
$$\mathfrak{M}\big(G_1\cup G_2, (\bmi{x}_{G_1}, \bmi{x}_{G_2})\big)=\mathfrak{M}(G_1, \bmi{x}_{G_1})\mathfrak{M}(G_2, \bmi{x}_{G_2}),$$
where $G_1\cup G_2$ denotes the disjoint union of two graphs  $G_1$ and $G_2$.
So, in what follows, we  often   restrict our attention on   connected graphs.

We need  the following useful lemma in the sequel.

\begin{lemma}[Amini \cite{Amini}]\label{basiclemma}
Let $G$ be a graph. For any vertex  $v\in V(G)$,
$$\mathfrak{M}(G, \bmi{x}_G)=x_v\mathfrak{M}(G-v,\bmi{x}_{G-v})-\sum_{w\in N_G(v)}\mathfrak{M}(G-v-w,\bmi{x}_{G-v-w}).$$
\end{lemma}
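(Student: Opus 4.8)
The plan is to prove the recurrence by splitting the defining sum \eqref{defx} according to whether the distinguished vertex $v$ is covered by the matching, and then to match each resulting family of matchings bijectively with the matchings of a smaller graph. This is the natural multivariate analogue of the standard deletion recurrences for the ordinary matching polynomial, so I expect no new idea beyond careful bookkeeping to be required.

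First I would partition $\mathcal{M}(G)$ into the matchings $M$ with $v\notin V(M)$ and those with $v\in V(M)$. For a matching in the first class, no edge of $M$ is incident to $v$, so $M$ is precisely a matching of $G-v$, and this correspondence is a bijection between $\{M\in\mathcal{M}(G):v\notin V(M)\}$ and $\mathcal{M}(G-v)$. Since $v$ is uncovered, the uncovered set is $V(G)\setminus V(M)=\{v\}\cup\big((V(G)\setminus\{v\})\setminus V(M)\big)$, so the monomial attached to $M$ factors as $x_v$ times the monomial attached to the same $M$ viewed inside $G-v$, while the signs $(-1)^{|M|}$ agree because the cardinality is unchanged. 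Summing over this class therefore yields exactly $x_v\,\mathfrak{M}(G-v,\bmi{x}_{G-v})$, accounting for the first term on the right-hand side.

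Next I would treat the matchings $M$ with $v\in V(M)$. Each such $M$ contains a unique edge incident to $v$, say $\{v,w\}$ with $w\in N_G(v)$; deleting this edge produces $M'=M\setminus\{\{v,w\}\}$, a matching of $G-v-w$ that covers neither $v$ nor $w$. Conversely, any choice of $w\in N_G(v)$ together with a matching $M'$ of $G-v-w$ reconstructs $M=M'\cup\{\{v,w\}\}$, so this is a bijection between $\{M\in\mathcal{M}(G):v\in V(M)\}$ and the disjoint union over $w\in N_G(v)$ of the sets $\mathcal{M}(G-v-w)$. The key bookkeeping points are that $V(G)\setminus V(M)=V(G-v-w)\setminus V(M')$, so the monomials coincide, and that $|M|=|M'|+1$, whence $(-1)^{|M|}=-(-1)^{|M'|}$. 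Summing contributes $-\sum_{w\in N_G(v)}\mathfrak{M}(G-v-w,\bmi{x}_{G-v-w})$, and adding the two contributions establishes the identity.

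The only place where care is needed — and what I would regard as the sole (and minor) obstacle — is checking that the indeterminates line up correctly under restriction: that the monomial of $M$ in $G$, after stripping the factors coming from $v$ (and $w$), is literally the monomial of the corresponding matching in the smaller graph evaluated at $\bmi{x}_{G-v}$ (respectively $\bmi{x}_{G-v-w}$). This is immediate from the stated convention that $\bmi{x}_H$ shares the coordinates of $\bmi{x}_G$ on $V(H)$, so each uncovered vertex contributes the same indeterminate in both settings.
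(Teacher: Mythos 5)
Your proof is correct and complete. Note that the paper itself offers no proof of this lemma to compare against: it is stated as a citation to Amini \cite{Amini} and used as a black box. Your argument --- partitioning $\mathcal{M}(G)$ according to whether $v$ is covered, with the bijection $M\mapsto M\setminus\{\{v,w\}\}$ onto $\mathcal{M}(G-v-w)$ in the covered case and the sign flip from $|M|=|M'|+1$ --- is precisely the standard proof of this recurrence, the direct multivariate analogue of the classical deletion recurrence \eqref{var1x} for $\mathscr{M}(G,x)$, and the bookkeeping of indeterminates under the convention that $\bmi{x}_H$ restricts $\bmi{x}_G$ is handled correctly.
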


By combining Lemma \ref{basiclemma} and \eqref{1x}, we get
\begin{equation}\label{var1x}\mathscr{M}(G, x)=x\mathscr{M}(G-v,x)-\sum_{w\in N_G(v)}\mathscr{M}(G-v-w,x),\end{equation}
which is a well known recursive formula for the matching polynomial.

The following theorem, which is a generalization of   \eqref{subx},   plays a crucial role in our proofs in Section \ref{sec3}.

\begin{theorem}\label{subdivisionlemma}
Let $G$ be a graph.
For any   subset $W$ of $V(G)$,
$$\mathscr{M}\big(S(G)-W, x\big)=x^{|E(G)|-|V(G)|+|W|}\mathfrak{M}(G-W, x^2\bmi{1}_{G-W}-\bmi{d}_{G, G-W}).$$
\end{theorem}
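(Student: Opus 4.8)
The plan is to prove this identity by induction on the size of $W$, leaning on the known base case $W=\emptyset$, which is exactly \eqref{subx}. Indeed, when $W=\emptyset$ the claimed formula reads $\mathscr{M}(S(G),x)=x^{|E(G)|-|V(G)|}\mathfrak{M}(G,x^2\bmi{1}_G-\bmi{d}_G)$, and by \eqref{lapx} the right-hand factor $\mathfrak{M}(G,x^2\bmi{1}_G-\bmi{d}_G)$ equals $\mathscr{L\hspace{-0.7mm}M}(G,x^2)$, so this is precisely the Yan--Yeh relation \eqref{subx}.

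\smallskip

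For the inductive step, I would fix a vertex $v\in W$ and relate both sides upon removing $v$ from $W$. The key is that deleting the graph vertex $v$ from $S(G)$ is more delicate than deleting it from $G$, because in $S(G)$ the vertex $v$ is adjacent not to its neighbours in $G$ but to the $d_G(v)$ subdivision vertices $\upsilon_e$ on the edges $e$ incident with $v$. The natural tool is the recursion of Lemma \ref{basiclemma} applied in the subdivision. Concretely, I would expand $\mathscr{M}(S(G)-(W\setminus\{v\}),x)=\mathfrak{M}(S(G)-(W\setminus\{v\}),x\bmi{1})$ by peeling off the vertex $v$:
\begin{equation*}
\mathscr{M}\big(S(G)-(W\setminus\{v\}),x\big)=x\,\mathscr{M}\big(S(G)-W,x\big)-\sum_{e\ni v}\mathscr{M}\big(S(G)-W-\upsilon_e,x\big),
\end{equation*}
where the sum runs over the $d_G(v)$ edges $e$ incident with $v$ in $G$, and $\upsilon_e$ denotes the corresponding subdivision vertex. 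This reduces the left-hand side to the quantity $\mathscr{M}(S(G)-W,x)$ appearing in the induction hypothesis for $W$, together with correction terms $\mathscr{M}(S(G)-W-\upsilon_e,x)$.

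\smallskip

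On the $G$-side I would mirror this with Lemma \ref{basiclemma} applied to $G-(W\setminus\{v\})$ at the vertex $v$, noting that the $v$-coordinate of the substitution vector is $x^2-d_G(v)$:
\begin{equation*}
\mathfrak{M}\big(G-(W\setminus\{v\}),\,x^2\bmi{1}-\bmi{d}_{G,\,\cdot}\big)=\big(x^2-d_G(v)\big)\mathfrak{M}\big(G-W,\cdot\big)-\sum_{w\in N_{G-(W\setminus\{v\})}(v)}\mathfrak{M}\big(G-W-w,\cdot\big).
\end{equation*}
The main obstacle, and the crux of the argument, will be identifying each correction term $\mathscr{M}(S(G)-W-\upsilon_e,x)$ on the subdivision side with the right power of $x$ times the corresponding term on the $G$-side. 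The point is that $S(G)-W-\upsilon_e$ is \emph{not} of the form $S(H)-W'$ for a subgraph $H$: removing the single subdivision vertex $\upsilon_e$ breaks the symmetric structure, leaving the endpoint $v$ now pendant-free along that edge. I expect to handle this by one further application of Lemma \ref{basiclemma} in $S(G)-W-\upsilon_e$ at the vertex $v$ (which has become isolated from the rest if $e$ was its only remaining edge, or otherwise has reduced degree), thereby splitting off an $x$ factor from $v$ and reassembling the remaining graph as a genuine subdivision $S(G-W-v)$ or $S(G-W)$ with an adjusted exponent. Careful bookkeeping of the exponent of $x$—tracking how the count $|E|-|V|+|W|$ changes as edges and vertices are deleted, together with the $x$ picked up from isolating $v$ and from the $\upsilon_e$ on the non-removed half-edges—will be the routine but essential part, and I would verify that the two sides match term by term so that both reduce to the induction hypothesis for the smaller set $W\setminus\{v\}$.
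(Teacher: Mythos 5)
Your induction runs in the wrong direction, and this is a genuine gap, not a bookkeeping issue. With the induction hypothesis available only for the \emph{smaller} set $W\setminus\{v\}$, your two displayed recursions (both of which are correct) yield a single equation relating the target $\mathscr{M}(S(G)-W,x)$ to the correction terms $\mathscr{M}(S(G)-W-\upsilon_e,x)$, and these correction terms cannot be resolved from what you know. Indeed, for an edge $e=\{v,w\}$ with $w\in W$ the vertex $\upsilon_e$ is isolated in $S(G)-W$, so that term is $\tfrac1x\mathscr{M}(S(G)-W,x)$ and can be absorbed; but for $e=\{v,w\}$ with $w\notin W$ the vertex $\upsilon_e$ is pendant at $w$ in $S(G)-W$, and applying \eqref{var1x} at $\upsilon_e$ gives
\begin{equation*}
\mathscr{M}\big(S(G)-W-\upsilon_e,x\big)=\frac1x\,\mathscr{M}\big(S(G)-W,x\big)+\frac1{x^2}\,\mathscr{M}\big(S(G)-(W\cup\{w\}),x\big),
\end{equation*}
so evaluating it amounts to knowing the theorem for the \emph{larger} set $W\cup\{w\}$ --- exactly what an upward induction on $|W|$ has not yet established. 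You are left with one equation in several unknowns, and the system does not close. Your proposed patch does not repair this: you cannot apply Lemma \ref{basiclemma} ``in $S(G)-W-\upsilon_e$ at the vertex $v$'' because $v\in W$ has already been deleted from that graph, and $S(G)-W-\upsilon_e$ is \emph{not} a subdivision $S(G-W-v)$ or $S(G-W)$ up to powers of $x$: every edge of $G$ joining $V(G)\setminus W$ to $W$ leaves behind a pendant subdivision vertex, and pendant vertices alter the matching polynomial in an essential way, not merely by factors of $x$.

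The paper inducts in the opposite direction, on $k=|V(G)\setminus W|$, so the hypothesis covers all \emph{larger} sets; the base case $k=1$ is computed directly (a star plus isolated vertices), rather than quoting \eqref{subx}, which the paper then obtains as a corollary. In the step one peels off a vertex $u\notin W$: Lemma \ref{basiclemma} on the $G$-side at $u$ produces only $\mathfrak{M}(G-W-u,\cdot)$ and $\mathfrak{M}(G-W-u-v,\cdot)$, both covered by the hypothesis, while the subdivision-side corrections are evaluated purely locally --- the subdivision vertices of edges from $u$ into $W$ are isolated in $H=S(G)-W-u$ and those of edges from $u$ out of $W$ are pendant, which is the content of the paper's identity \eqref{eqx}. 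If you reorganize your argument this way (peel a vertex \emph{not} in $W$, keep the hypothesis for supersets), your computation of the correction terms above slots in and the proof closes; as written, it does not.
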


\begin{proof}
For simplicity, let  $k=|V(G)\setminus W|$ and $m=|E(G)|$.
We prove the assertion  by induction on $k$.
If   $V(G)\setminus W=\{u\}$ for some vertex $u\in V(G)$, then $S(G)-W$ consists of a star  on    $d_G(u)+1$ vertices  and $|E(G)|-d_G(u)$ isolated vertices.
Therefore,
$$\mathscr{M}\big(S(G)-W,x\big)=x^{m+1}-d_G(u)x^{m-1}$$
and
$$\mathfrak{M}\left(G-W, x^2\bmi{1}-\bmi{d}\right)=x^2-d_G(u).$$
So, the claimed equality holds for  $k=1$.
Assume that $k\geq2$.
Choose a vertex $u\in V(G)\setminus W$ and let $H=S(G)-W-u$.
By  Lemma  \ref{basiclemma}, the induction   hypothesis and \eqref{var1x}, we have
\begin{align*}
x^{m-k+2}\mathfrak{M}(G-W, x^2\bmi{1}-\bmi{d})&=x\big(x^2-d_G(u)\big)x^{m-k+1}\mathfrak{M}(G-W-u, x^2\bmi{1}-\bmi{d})\\
&-\sum_{v\in N_{G-W}(u)}x^{m-k+2}\mathfrak{M}(G-W-u-v, x^2\bmi{1}-\bmi{d})\\
&=x\big(x^2-d_G(u)\big)\mathscr{M}(H,x)
-\sum_{v\in N_{G-W}(u)}\mathscr{M}(H-v,x)\\
&=x^2\mathscr{M}\big(S(G)-W,x\big)
+x^2\sum_{v\in N_{S(G)-W}(u)}\mathscr{M}(H-v,x)\\
&-d_G(u)x\mathscr{M}(H,x)
-\sum_{v\in N_{G-W}(u)}\mathscr{M}(H-v,x).
\end{align*}
Hence, in order to complete the induction step,   it suffices to prove that
\begin{equation}\label{eqx}
d_G(u)x\mathscr{M}(H,x)=x^2\sum_{v\in N_{S(G)-W}(u)}\mathscr{M}(H-v,x)-\sum_{v\in N_{G-W}(u)}\mathscr{M}(H-v,x).
\end{equation}
To establish  \eqref{eqx},
let $N_G(u)\cap W=\{a_1, \ldots, a_s\}$
and $N_G(u)\setminus W=\{b_1, \ldots, b_t\}$.
Also, for  $i=1, \ldots, s$, let $a'_i$ be   the vertex of $S(G)$ corresponding to the edge $\{u, a_i\}$ of $G$  and,  for $j=1, \ldots, t$, let $b'_j$  be   the vertex of $S(G)$ corresponding to the edge $\{u, b_j\}$ of $G$.
Notice  that, if one of
$N_G(u)\cap W$
and $N_G(u)\setminus W$ is empty,
then we may derive \eqref{eqx} by the  same discussion as below.
We have  $d_G(u)=s+t$ and $N_{S(G)-W}(u)=N_{S(G)}(u)=\{a'_1, \ldots, a'_s, b'_1, \ldots, b'_t\}$.
The structure of $H$ is illustrated  in  Figure \ref{figM1}.

\begin{figure}[htb]
\centering
\begin{tikzpicture}
\draw[thick]   (8,-1.2) ellipse[x radius=3.8,y radius=1];
\draw[thin] (9.7,-1.8)--(10.5,-2.3)node[right] {$|E(G[W])|$ isolated vertices};
\draw[thick, dotted]   (8,-1.2) ellipse[x radius=3.3,y radius=0.5];
\draw[thin] (7,-1.5)--(5,-2.3)node[left] {Vertices of $W$};
\filldraw[fill=black,draw=black] (5.4,   -1.3)  circle [radius=0.07] node[above] {$a_1$};
\filldraw[fill=black,draw=black] (6.1,   -1.3)  circle [radius=0.07] node[above] {$a_2$};
\filldraw[fill=black,draw=black] (6.8,   -1.3)  circle [radius=0.07] node[above] {$a_3$};
\filldraw[fill=black,draw=black] (9,     -1.3)  circle [radius=0.07] node[above] {$a_{s-1}$};
\filldraw[fill=black,draw=black] (9.6,   -1.3)  circle [radius=0.07] node[above] {$a_s$};
\filldraw[fill=black,draw=black] (7.6,   -1.3)  circle [radius=0.02];
\filldraw[fill=black,draw=black] (8,   -1.3)  circle [radius=0.02];
\filldraw[fill=black,draw=black] (8.4,   -1.3)  circle [radius=0.02];
\draw[thick,dotted] (6,   -3)--(5.4,   -1.3);
\draw[thick,dotted] (6.6,   -3)--(6.1,   -1.3);
\draw[thick,dotted] (7.2,   -3)--(6.8,   -1.3);
\draw[thick,dotted] (8.8,   -3)--(9,   -1.3);
\draw[thick,dotted] (9.4,   -3)--(9.6,   -1.3);
\filldraw[fill=black,draw=black] (6,    -3)  circle [radius=0.07] node[above] {$a'_1$};
\filldraw[fill=black,draw=black] (6.6,  -3)  circle [radius=0.07] node[above] {$a'_2$};
\filldraw[fill=black,draw=black] (7.2,  -3)  circle [radius=0.07] node[above] {$a'_3$};
\filldraw[fill=black,draw=black] (8.8,  -3)  circle [radius=0.07] node[above] {$a'_{s-1}$};
\filldraw[fill=black,draw=black] (9.4,  -3)  circle [radius=0.07] node[above] {$a'_s$};
\filldraw[fill=black,draw=black] (7.6,   -3)  circle [radius=0.02];
\filldraw[fill=black,draw=black] (8,     -3)  circle [radius=0.02];
\filldraw[fill=black,draw=black] (8.4,   -3)  circle [radius=0.02];
\draw[thick, dotted] (6,   -3)--(8,     -4);
\draw[thick, dotted] (6.6,   -3)--(8,     -4);
\draw[thick, dotted] (7.2,   -3)--(8,     -4);
\draw[thick, dotted] (8.8,   -3)--(8,     -4);
\draw[thick, dotted] (9.4,   -3)--(8,     -4);
\filldraw[fill=black,draw=black] (8,     -4)  circle [radius=0.08] node[above] {$u$};
\draw[thick,dotted] (8,  -4)  circle [radius=0.4];
\filldraw[fill=black,draw=black] (6,   -5)  circle [radius=0.07] node[above] {$b'_1$};
\filldraw[fill=black,draw=black] (6.6, -5)  circle [radius=0.07] node[above] {$b'_2$};
\filldraw[fill=black,draw=black] (7.2, -5)  circle [radius=0.07] node[above] {$b'_3$};
\filldraw[fill=black,draw=black] (8.8, -5)  circle [radius=0.07] node[above] {$b'_{t-1}$};
\filldraw[fill=black,draw=black] (9.4, -5)  circle [radius=0.07] node[above] {$b'_t$};
\filldraw[fill=black,draw=black] (7.6, -5)  circle [radius=0.02];
\filldraw[fill=black,draw=black] (8,   -5)  circle [radius=0.02];
\filldraw[fill=black,draw=black] (8.4, -5)  circle [radius=0.02];
\draw[thick, dotted] (6,   -5)--(8,     -4);
\draw[thick, dotted] (6.6,   -5)--(8,     -4);
\draw[thick, dotted] (7.2,   -5)--(8,     -4);
\draw[thick, dotted] (8.8,   -5)--(8,     -4);
\draw[thick, dotted] (9.4,   -5)--(8,     -4);
\draw[very thick] (6,   -5)--(5.4,   -6.8);
\draw[very thick] (6.6,   -5)--(6.1,   -6.8);
\draw[very thick] (7.2,   -5)--(6.8,   -6.8);
\draw[very thick] (8.8,   -5)--(9,   -6.8);
\draw[very thick] (9.4,   -5)--(9.6,   -6.8);
\draw[thick]  (8,-6.8) ellipse[x radius=3.8,y radius=1];
\draw[ thin] (10,-6.5)--(10.5,-5.8)node[right] {$S(G-W-u)$};
\filldraw[fill=black,draw=black] (5.4,   -6.8)  circle [radius=0.07] node[below] {$b_1$};
\filldraw[fill=black,draw=black] (6.1,   -6.8)  circle [radius=0.07] node[below] {$b_2$};
\filldraw[fill=black,draw=black] (6.8,   -6.8)  circle [radius=0.07] node[below] {$b_3$};
\filldraw[fill=black,draw=black] (9,     -6.8)  circle [radius=0.07] node[below] {$b_{t-1}$};
\filldraw[fill=black,draw=black] (9.6,   -6.8)  circle [radius=0.07] node[below] {$b_t$};
\filldraw[fill=black,draw=black] (7.6,   -6.8)  circle [radius=0.02];
\filldraw[fill=black,draw=black] (8,     -6.8)  circle [radius=0.02];
\filldraw[fill=black,draw=black] (8.4,   -6.8)  circle [radius=0.02];
\end{tikzpicture}
\caption{The structure of  $H$.}\label{figM1}
\end{figure}
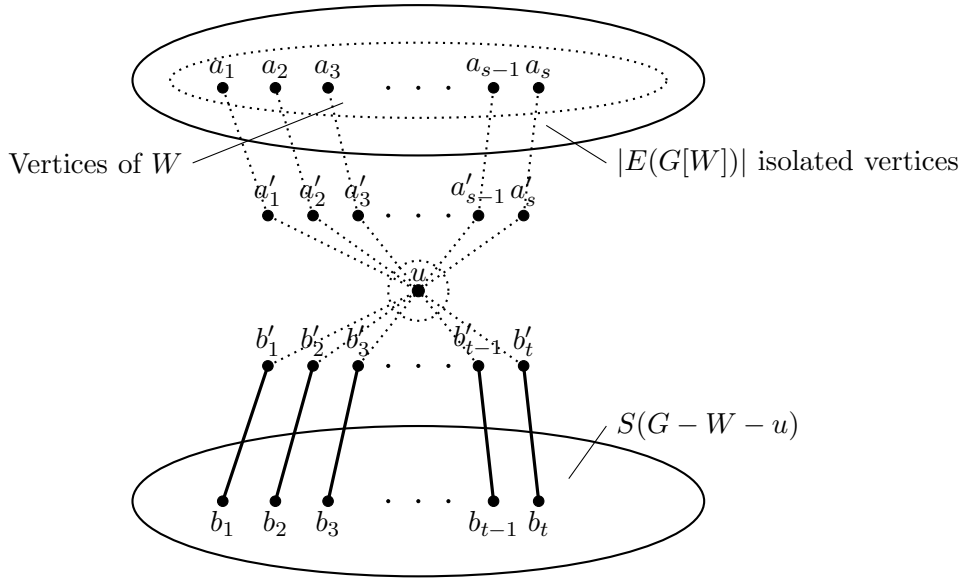

We have   $d_H(a'_i)=0$  for $i=1, \ldots, s$  and $d_H(b'_j)=1$ for $j=1, \ldots, t$. By applying \eqref{var1x} for    $a'_i$ and $b'_j$, we find that
$$\mathscr{M}(H,x)=x\mathscr{M}(H-a'_i,x)$$
and
\begin{align*}x\mathscr{M}(H, x)&=x^2\mathscr{M}(H-b'_j,x)-x\mathscr{M}(H-b_j-b'_j,x)\\&=x^2\mathscr{M}(H-b'_j,x)-\mathscr{M}(H-b_j,x).\end{align*}
Therefore,
\begin{align*}
d_G(u)x\mathscr{M}(H,x)&=sx\mathscr{M}(H,x)+tx\mathscr{M}(H,x)\\
&=x^2\sum_{i=1}^s\mathscr{M}(H-a'_i,x)+x^2\sum_{j=1}^t\mathscr{M}(H-b'_j,x)-\sum_{j=1}^t\mathscr{M}(H-b_j,x)\\
&=x^2\sum_{v\in N_{S(G)-W}(u)}\mathscr{M}(H-v,x)-\sum_{v\in N_{G-W}(u)}\mathscr{M}(H-v,x),
\end{align*}
which is exactly  \eqref{eqx}.  This  completes the proof.
\end{proof}

In what follows, we  prove some results about  the  Laplacian matching polynomial    by analysing the structures of the        subdivision of graphs.
The following consequence immediately follows from  Theorem \ref{subdivisionlemma} and  the first   statement in   \eqref{thmmatching1}.
It worth to mention that  the following result    is     proved in \cite{Zhang} for a different  expression of the Laplacian matching polynomial.

\begin{corollary}\label{XrootrelationX}
Let $G$ be a graph. Then  $$\mathscr{M}\big(S(G), x\big)=x^{|E(G)|-|V(G)|}\mathscr{L\hspace{-0.7mm}M}(G, x^2).$$
In particular, the zeros of $\mathscr{L\hspace{-0.7mm}M}(G, x)$ are nonnegative real numbers.
\end{corollary}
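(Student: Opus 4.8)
The plan is to derive this corollary as the special case $W=\emptyset$ of Theorem \ref{subdivisionlemma}. Setting $W=\emptyset$, we have $S(G)-W=S(G)$, $|W|=0$, $G-W=G$, so the exponent $|E(G)|-|V(G)|+|W|$ collapses to $|E(G)|-|V(G)|$, while the multivariate argument $x^2\bmi{1}_{G-W}-\bmi{d}_{G,G-W}$ becomes $x^2\bmi{1}_G-\bmi{d}_G$. Theorem \ref{subdivisionlemma} then reads
$$\mathscr{M}\big(S(G),x\big)=x^{|E(G)|-|V(G)|}\,\mathfrak{M}\big(G, x^2\bmi{1}_G-\bmi{d}_G\big).$$
To finish the identity I would invoke \eqref{lapx}, namely $\mathfrak{M}(G, x\bmi{1}_G-\bmi{d}_G)=\mathscr{L\hspace{-0.7mm}M}(G,x)$; substituting $x^2$ for $x$ turns the right-hand factor into $\mathscr{L\hspace{-0.7mm}M}(G,x^2)$, yielding the asserted formula. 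This part is purely formal, requiring only that the bookkeeping in the specialization be tracked correctly.

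For the ``in particular'' statement I would argue through the root structure of both sides. By the first assertion of \eqref{thmmatching1}, every root of $\mathscr{M}(H,x)$ is real for each connected graph $H$; since the matching polynomial of a disconnected graph factors as the product over its connected components (as recorded for the multivariate polynomial in Section \ref{sec2}), and a product of real-rooted polynomials is real-rooted, it follows that $\mathscr{M}(S(G),x)$ has only real roots for an arbitrary graph $G$. Writing $\mathscr{L\hspace{-0.7mm}M}(G,y)=\prod_i(y-\mu_i)$, the identity just proved shows that, up to a power of $x$, the polynomials $\mathscr{M}(S(G),x)$ and $\prod_i(x^2-\mu_i)$ agree; in particular $\prod_i(x^2-\mu_i)$ must be real-rooted as a polynomial in $x$, and its roots are exactly the numbers $\pm\sqrt{\mu_i}$.

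The crux of the nonnegativity claim is then the elementary observation that if every $\pm\sqrt{\mu_i}$ is real, then each $\mu_i$ must be a nonnegative real number, since a negative or non-real $\mu_i$ would force $\sqrt{\mu_i}$ to be non-real, contradicting the real-rootedness established above. Hence all the roots $\mu_i$ of $\mathscr{L\hspace{-0.7mm}M}(G,x)$ are nonnegative reals. I do not anticipate a genuine obstacle here, as the corollary is essentially a direct reading of the theorem; the only point demanding care is the passage through the squaring substitution, ensuring that the factorization of $\mathscr{L\hspace{-0.7mm}M}(G,x^2)$ in terms of the $\pm\sqrt{\mu_i}$ is read off correctly, so that real-rootedness on the subdivision side transfers to nonnegativity on the Laplacian-matching side.
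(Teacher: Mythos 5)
Your proof is correct and follows the same route as the paper: the paper likewise obtains this corollary immediately by taking $W=\emptyset$ in Theorem \ref{subdivisionlemma} (together with \eqref{lapx}) and then deducing nonnegativity of the zeros of $\mathscr{L\hspace{-0.7mm}M}(G,x)$ from the real-rootedness of $\mathscr{M}(S(G),x)$ given in the first statement of \eqref{thmmatching1}. Your write-up merely makes explicit the bookkeeping (multiplicativity over components and the squaring substitution) that the paper leaves implicit.
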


For  a  graph  $G$,  it is proved   that  $\mathscr{L\hspace{-0.7mm}M}(G, x)=\varphi(L(G), x)$ if and only if $G$ is a forest   \cite{Ali}.
Since $0$ is an eigenvalue of $L(G)$,  we deduce that   $\mathscr{L\hspace{-0.7mm}M}(G, 0)=0$ if  $G$ is a forest. From \eqref{defali}, we get  the   combinatorial identity
$$\sum_{M\in\mathcal{M}(F)}(-1)^{|M|}\left(\prod_{v\in V(F)\setminus V(M)}d_F(v)\right)=0$$
for any   forest $F$.
The following  theorem, which  is proved   in \cite{Zhang},   gives        a necessary and sufficient condition for  $0$  to be a root of the Laplacian matching polynomial.
We present  here a different proof for it.

\begin{theorem}[Chen, Zhang \cite{Zhang}]
Let $G$ be a connected graph.
Then,  $0$  is a root of $\mathscr{L\hspace{-0.7mm}M}(G, x)$ if and only if $G$ is a tree.
\end{theorem}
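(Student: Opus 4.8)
The plan is to read the statement off the subdivision identity in Corollary~\ref{XrootrelationX}, which turns the order of vanishing of $\mathscr{L\hspace{-0.7mm}M}(G,x)$ at $0$ into a question about the matching number of $S(G)$. First I would record the elementary fact that, writing $\mathscr{M}(H,x)=\sum_{M\in\mathcal{M}(H)}(-1)^{|M|}x^{|V(H)|-2|M|}$, the multiplicity of $0$ as a root of $\mathscr{M}(H,x)$ equals $|V(H)|-2\nu(H)$, where $\nu(H)$ is the maximum size of a matching of $H$; indeed the lowest-degree monomial is contributed exactly by the maximum matchings, and its coefficient, namely $(-1)^{\nu(H)}$ times the number of maximum matchings, is nonzero. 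Applying this with $H=S(G)$ and comparing the orders of vanishing at $x=0$ on the two sides of $\mathscr{M}(S(G),x)=x^{|E(G)|-|V(G)|}\mathscr{L\hspace{-0.7mm}M}(G,x^2)$, and using that a root of multiplicity $m$ of $\mathscr{L\hspace{-0.7mm}M}(G,x)$ at $0$ becomes a root of multiplicity $2m$ of $\mathscr{L\hspace{-0.7mm}M}(G,x^2)$, I would obtain that the multiplicity of $0$ as a root of $\mathscr{L\hspace{-0.7mm}M}(G,x)$ is exactly $|V(G)|-\nu(S(G))$. In particular, $0$ is a root of $\mathscr{L\hspace{-0.7mm}M}(G,x)$ if and only if $\nu(S(G))<|V(G)|$.

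The next step is the structural observation that $S(G)$ is bipartite with parts $V(G)$ and the set of subdivision vertices $\{\upsilon_e:e\in E(G)\}$, and that every edge of $S(G)$ joins an original vertex to a subdivision vertex. Consequently each edge of a matching of $S(G)$ saturates exactly one vertex of $V(G)$, so $\nu(S(G))\le|V(G)|$, with equality precisely when $S(G)$ has a matching saturating all of $V(G)$. Thus the theorem reduces to showing that, for a connected graph $G$, the subdivision $S(G)$ has a matching saturating $V(G)$ if and only if $G$ is not a tree.

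For this last equivalence I would translate a saturating matching into a combinatorial selection: a matching of $S(G)$ saturating $V(G)$ is the same as an injective assignment $v\mapsto e(v)$ of a distinct incident edge to each $v\in V(G)$, obtained by matching $v$ to $\upsilon_{e(v)}$. I would then verify Hall's condition for the $V(G)$-side of the incidence bipartite graph. For $S\subseteq V(G)$ with $S\ne V(G)$, each connected component of $G[S]$ of size $n_i$ contributes at least $n_i-1$ internal edges together with at least one boundary edge to $V(G)\setminus S$ (this boundary edge exists by connectedness of $G$, since otherwise the component would be a whole component of $G$), and these boundary edges are distinct across components; summing gives $|N(S)|\ge|S|$. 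For $S=V(G)$ one has $|N(S)|=|E(G)|$, which is at least $|V(G)|$ exactly when the connected graph $G$ contains a cycle. Hence Hall's condition holds for all $S$ if and only if $|E(G)|\ge|V(G)|$, that is, if and only if $G$ is not a tree. Equivalently, the same injective edge-selection is precisely an orientation of $G$ with every out-degree at least one, which exists if and only if every component carries a cycle.

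I expect the main obstacle to be, on the one hand, the bookkeeping around the possibly negative exponent $|E(G)|-|V(G)|$ in the subdivision identity, and, more substantively, the verification of Hall's condition at the proper subsets $S\subsetneq V(G)$: one must argue that the internal edges together with one boundary edge per component suffice, the crucial use of connectedness being that no component of $G[S]$ can be edge-isolated from the rest of $G$. The case $S=V(G)$ is where the cycle hypothesis genuinely enters, matching the expectation that trees are exactly the connected graphs for which $0$ is forced to be a root of $\mathscr{L\hspace{-0.7mm}M}(G,x)$.
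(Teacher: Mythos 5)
Your proposal is correct, and its overall skeleton coincides with the paper's: both pass through the identity $\mathscr{M}(S(G),x)=x^{|E(G)|-|V(G)|}\mathscr{L\hspace{-0.7mm}M}(G,x^2)$ of Corollary \ref{XrootrelationX}, reduce the question to whether the bipartite graph $S(G)$ has a matching saturating the part $V(G)$, and settle that question with Hall's theorem. The differences are in the two supporting steps, and both differences are real. First, where the paper only tracks the smallest power of $x$ in $\mathscr{M}(S(G),x)$ to decide whether the constant term of $\mathscr{L\hspace{-0.7mm}M}(G,x)$ vanishes, you extract the exact multiplicity formula $m_0=|V(G)|-\nu(S(G))$ for the root $0$; this is a genuine strengthening (it recovers, for instance, the paper's later corollary that $0$ is a simple root when $G$ is a tree, since $\nu(S(T))=|V(T)|-1$ for a tree $T$). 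Second, the Hall verifications are different arguments: the paper first peels off degree-$1$ vertices to reduce to $\delta(G)\geq 2$ and then uses a short edge-degree count (each vertex of $V(G)$ meets at least $2|U|$ edges of $S(G)$, while each subdivision vertex meets exactly $2$), whereas you verify Hall's condition directly for every connected $G$ by decomposing $G[S]$ into components and counting $n_i-1$ spanning-tree edges plus one boundary edge per component, the boundary edge existing by connectedness of $G$. Your version avoids the reduction step (and the implicit induction it carries, namely that $G-u$ stays connected and non-tree), and it isolates cleanly where the hypothesis $|E(G)|\geq|V(G)|$ enters: only at the set $S=V(G)$. The paper's count is shorter once the minimum-degree reduction is granted; yours is more self-contained and yields the sharper multiplicity statement along the way.
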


\begin{proof}
If $G$ is a tree, then $|E(G)|=|V(G)|-1$ and so $\mathscr{L\hspace{-0.7mm}M}(G, x^2)=x\mathscr{M}(S(G), x)$  by   Corollary  \ref{XrootrelationX}, implying that  $0$  is a root of $\mathscr{L\hspace{-0.7mm}M}(G, x)$.
We   prove that  $0$  is not a root of $\mathscr{L\hspace{-0.7mm}M}(G, x)$  if $G$ is not a tree.
For this, assume that  $|E(G)|\geq|V(G)|$.
One may easily  consider  $S(G)$ as  a bipartite graph with the bipartition $\{V(G), E(G)\}$ after  identifying each  new vertex   $\upsilon_e$ of $S(G)$ with its  corresponding   edge $e$ of $G$.

We claim that $S(G)$ has a matching    that saturates   the part  $V(G)$.
If $G$ contains a vertex $u$ with degree $1$  and  $e$ is the edge incident to $u$ in $G$, then   it suffices to prove that $S(G-u)$ has a matching  that saturates   the part   $V(G-u)$,  since the union of such matching   and the edge $\{u, \upsilon_e\}$ forms  a matching of $S(G)$  that saturates   the part    $V(G)$.
Thus,  we may assume that $d_G(v)\geq 2$ for all vertices  $v\in V(G)$.
We are going to establish  that  $S(G)$ satisfies   Hall's condition    \cite[Theorem 16.4]{244}.
For a subset  $W$ of $V(G)$, we shall use $N_G(W)$ to denote the set of vertices of $G$ each of which is adjacent  to a vertex in $W$ and   $\partial_G(W)$  to denote the set of edges of $G$ each of which has  exactly  one endpoint  in $W$.
For any  subset $U$ of the part $V(G)$, since $d_G(v)\geq 2$ for all vertices $v\in V(G)$,
\begin{equation}\label{H1x}|\partial_{S(G)}(U)|\geq2|U|.\end{equation}
On the other hand, $d_{S(G)}(\upsilon_e)=2$ for each  $e\in E(G)$,   so
\begin{equation}\label{H2x}\big|\partial_{S(G)}\big(N_{S(G)}(U)\big)\big|=2|N_{S(G)}(U)|.\end{equation}
Clearly,   $|\partial_{S(G)}(N_{S(G)}(U))|\geq|\partial_{S(G)}(U)|$ which implies that    $|N_{S(G)}(U)|\geq|U|$ using  \eqref{H1x} and \eqref{H2x}.  This means that   $S(G)$ satisfies   Hall's condition, as required.

We proved that $S(G)$ has a   matching    that saturates   the part  $V(G)$. This means that the   smallest power of $x$ in $\mathscr{M}(S(G),x)$ is $|E(G)|-|V(G)|$  by \eqref{defx} and \eqref{1x}. In view of  Corollary  \ref{XrootrelationX}, $\mathscr{M}(S(G), x)=x^{|E(G)|-|V(G)|}\mathscr{L\hspace{-0.7mm}M}(G, x^2)$
which shows that  the constant term in  $\mathscr{L\hspace{-0.7mm}M}(G, x)$ is nonzero.
So,  $0$  is not a root of $\mathscr{L\hspace{-0.7mm}M}(G, x)$. This completes the proof.
\end{proof}

In the next theorem, we give  a   lower   bound  on  the number of distinct  zeros   of the Laplacian matching polynomial.

\begin{theorem}\label{l-w}
Let $G$ be a connected graph  and let $\ell(G)$ be the length of the longest path in $G$. Then the number of distinct  positive roots  of $\mathscr{L\hspace{-0.7mm}M}(G,x)$ is at least equal to $\ell(G)$.
Also, if $\delta(G)\geq2$, then  $\mathscr{L\hspace{-0.7mm}M}(G,x)$ has   at least  $\ell(G)+1$  distinct  positive roots.
\end{theorem}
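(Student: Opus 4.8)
The plan is to transport the statement to the ordinary matching polynomial of the subdivision $S(G)$ via Corollary \ref{XrootrelationX} and then feed it into the known lower bound \eqref{thmmatching3} on the number of distinct roots of a matching polynomial. The governing identity is $\mathscr{M}(S(G),x)=x^{|E(G)|-|V(G)|}\mathscr{L\hspace{-0.7mm}M}(G,x^2)$, which pairs up the positive zeros of $\mathscr{L\hspace{-0.7mm}M}(G,\cdot)$. Concretely, if $r_1,\dots,r_p$ are the distinct positive roots of $\mathscr{L\hspace{-0.7mm}M}(G,x)$, then, using that every root of $\mathscr{L\hspace{-0.7mm}M}(G,\cdot)$ is nonnegative (Corollary \ref{XrootrelationX}), the distinct \emph{nonzero} roots of $\mathscr{M}(S(G),x)$ are exactly $\pm\sqrt{r_1},\dots,\pm\sqrt{r_p}$, that is, $2p$ of them. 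Since the only further value that can be a root of $\mathscr{M}(S(G),x)$ is $0$, I obtain the uniform upper bound: the number of distinct roots of $\mathscr{M}(S(G),x)$ is at most $2p+1$, regardless of whether $0$ is a root.

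For the first assertion I would first record that subdivision does not shorten the longest path: a longest path $v_0v_1\cdots v_{\ell(G)}$ of $G$ lifts in $S(G)$ to $v_0,e_1,v_1,\dots,e_{\ell(G)},v_{\ell(G)}$, a path of length $2\ell(G)$, so $\ell(S(G))\ge 2\ell(G)$. Combining this with \eqref{thmmatching3} shows that $\mathscr{M}(S(G),x)$ has at least $\ell(S(G))+1\ge 2\ell(G)+1$ distinct roots. Matching this against the bound $2p+1$ from the first paragraph gives $2p+1\ge 2\ell(G)+1$, hence $p\ge\ell(G)$. This half requires no case distinction, precisely because the upper bound $2p+1$ on the number of distinct roots of $\mathscr{M}(S(G),x)$ is valid whether or not $0$ occurs as a root.

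For the second assertion I would use the hypothesis $\delta(G)\ge 2$ to manufacture one \emph{extra} distinct root in $S(G)$. Since $G$ has no vertex of degree $1$, it is not a tree, so by the preceding theorem $0$ is not a root of $\mathscr{L\hspace{-0.7mm}M}(G,x)$, and hence not of $\mathscr{L\hspace{-0.7mm}M}(G,x^2)$. The crucial geometric step is to upgrade the path estimate to $\ell(S(G))\ge 2\ell(G)+1$. Taking a longest path $v_0\cdots v_{\ell(G)}$ of $G$, extremality forces every neighbour of the endpoint $v_0$ to lie on the path, and $d_G(v_0)\ge 2$ then supplies a neighbour $v_j$ with $j\ge 2$. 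The subdivision vertex $w$ of the chord $\{v_0,v_j\}$ is adjacent to $v_0$ in $S(G)$ and, since $\{v_0,v_j\}$ differs from every consecutive path-edge $e_i=\{v_{i-1},v_i\}$, it is distinct from all subdivision vertices already used; prepending $w$ therefore lengthens the lifted path to one of length $2\ell(G)+1$. Consequently $\mathscr{M}(S(G),x)$ has at least $2\ell(G)+2$ distinct roots, and comparing once more with the bound $2p+1$ yields $2p+1\ge 2\ell(G)+2$, so $p\ge\ell(G)+1$.

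The purely bookkeeping parts are the root-pairing count and the verification that $w$ is genuinely new. I expect the main obstacle to be the second assertion, and within it the path-lengthening claim $\ell(S(G))\ge 2\ell(G)+1$: the plain counting argument only delivers $p\ge\ell(G)$, so the entire improvement rests on exhibiting that one additional edge in $S(G)$. The delicate point is to confirm, using both the maximality of the longest path and $\delta(G)\ge 2$, that an endpoint chord produces a subdivision vertex not already traversed; the degenerate case worth keeping an eye on is when $G$ is a cycle, where $|E(G)|=|V(G)|$ and $0$ fails to be a root, although the uniform bound $2p+1$ absorbs this case with no separate treatment.
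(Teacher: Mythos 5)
Your proposal is correct and follows essentially the same route as the paper: transfer the problem to $\mathscr{M}(S(G),x)$ via Corollary \ref{XrootrelationX}, invoke the bound \eqref{thmmatching3} on distinct roots of the matching polynomial, lift a longest path of $G$ to one of length $2\ell(G)$ in $S(G)$, and use $\delta(G)\geq 2$ to prepend one more subdivision vertex and gain length $2\ell(G)+1$. The only cosmetic difference is bookkeeping: you bound the number of distinct roots of $\mathscr{M}(S(G),x)$ above by $2p+1$ via the exact $\pm\sqrt{r_i}$ pairing, while the paper counts downward with $\lceil \ell'/2\rceil$; these are equivalent.
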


\begin{proof}
For  convenience, let $\ell=\ell(G)$. Denote by $\ell'$  the length of the longest path in $S(G)$.
From   \eqref{thmmatching3},  $\mathscr{M}(S(G),x)$ has at least $\ell'+1$ distinct roots.
By   Corollary  \ref{XrootrelationX}, $\mathscr{M}(S(G), x)=x^{|E(G)|-|V(G)|}\mathscr{L\hspace{-0.7mm}M}(G, x^2)$
which shows that  $\mathscr{L\hspace{-0.7mm}M}(G, x^2)$ has at least $\ell'$ distinct nonzero roots. Since all roots of   $\mathscr{L\hspace{-0.7mm}M}(G, x)$  are real and nonnegative by    Corollary  \ref{XrootrelationX},   it follows that    $\mathscr{L\hspace{-0.7mm}M}(G, x)$  has at least $\lceil\ell'/2\rceil$ distinct positive roots.

For each  edge    $e\in E(G)$,   denote by    $\upsilon_e$     the vertex of $S(G)$  corresponding to   $e$.
Let    $w_0, w_1, \ldots, w_\ell$ be a path in  $G$.  Then,  $w_0, \upsilon_{e_1}, w_1,    \ldots,  \upsilon_{e_\ell}, w_\ell$ is a path in $S(G)$ of   length $2\ell$, where $e_i=\{w_{i-1}, w_i\}\in E(G)$ for $i=1, \ldots, \ell$. Thus,   $\ell'\geq2\ell$  and so $\mathscr{L\hspace{-0.7mm}M}(G, x)$   has at least $\ell$ distinct positive roots.

Now, assume that $\delta(G)\geq2$.  This assumption allows us to consider  a vertex   $w'\in N_G(w_0)\setminus\{w_1\}$.  Then,   $S(G)$ contains  the path $\upsilon_{e'}, w_0,  \upsilon_{e_1}, w_1,    \ldots,  \upsilon_{e_\ell}, w_\ell$  of   length $2\ell+1$, where  $e'=\{w', w_0\}\in E(G)$. Therefore,   $\ell'\geq2\ell+1$  and so  $\mathscr{L\hspace{-0.7mm}M}(G, x)$   has at least $\lceil\ell'/2\rceil\geq\ell+1$ distinct positive roots. This completes the proof.
\end{proof}

\begin{remark}
The second statement in Theorem \ref{l-w} implies  that,  if $G$
is a graph with a
Hamilton cycle, then the zeros of  $\mathscr{L\hspace{-0.7mm}M}(G,x)$ are all distinct.
\end{remark}

Given a  graph  $G$,
it is   known    that $\mathscr{M}(G, x)=\varphi(A(G, x)$    if and only if $G$ is a forest \cite{sac}.
Also, as we mentioned before, it is  established that  $\mathscr{L\hspace{-0.7mm}M}(G, x)=\varphi(L(G), x)$ if and only if $G$ is a forest   \cite{Ali}.
Below, we present a general  result   which shows that the multivariate matching polynomial of a forest has a  determinantal representation  in terms  of    its adjacency matrix,  which will be used in the next section.

\begin{theorem}\label{treeequal}
Let  $F$ be  a forest. Then $\mathfrak{M}(F, \bmi{x}_F)=\det(\bmi{X}_F-A(F))$,
where $\bmi{X}_F$ is a diagonal matrix
whose rows and columns are indexed by $V(F)$ and the $(v,  v)$-entry is
$x_v$ for any vertex   $v\in V(F)$.
In particular, $\mathscr{M}(F, x)=\varphi(A(F), x)$ and $\mathscr{L\hspace{-0.7mm}M}(F,x)=\varphi(L(F), x)$.
\end{theorem}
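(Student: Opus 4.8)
The plan is to expand $\det(\bmi{X}_F - A(F))$ by the Leibniz formula and to show that, because $F$ has no cycles, the only permutations contributing nonzero terms are exactly those coming from matchings of $F$. Writing $B = \bmi{X}_F - A(F)$, its entries are $B_{vv} = x_v$, $B_{vw} = -1$ when $\{v,w\} \in E(F)$, and $B_{vw} = 0$ otherwise. Hence
\[\det B = \sum_{\sigma} \operatorname{sgn}(\sigma) \prod_{v \in V(F)} B_{v\sigma(v)},\]
where $\sigma$ ranges over the permutations of $V(F)$, and a term is nonzero only if for every $v$ either $\sigma(v) = v$ or $\{v, \sigma(v)\} \in E(F)$.

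First I would decompose such a contributing $\sigma$ into its disjoint cycles. A cycle $(v_1\, v_2 \cdots v_k)$ with $k \geq 2$ forces $v_1 \sim v_2 \sim \cdots \sim v_k \sim v_1$ to be a closed walk through distinct vertices, that is, a cycle of length $k$ in $F$. Since $F$ is a forest it has no cycle of length at least $3$, so every nontrivial cycle of $\sigma$ is a transposition $(vw)$ with $\{v, w\} \in E(F)$. These transpositions are pairwise disjoint, so they form a matching $M \in \mathcal{M}(F)$, and the fixed points of $\sigma$ are precisely the vertices of $V(F) \setminus V(M)$; conversely every matching arises this way. For the permutation $\sigma$ attached to $M$ one has $\operatorname{sgn}(\sigma) = (-1)^{|M|}$, while the weight $\prod_v B_{v\sigma(v)}$ equals $\prod_{v \notin V(M)} x_v$, because each transposition contributes $B_{vw}B_{wv} = (-1)(-1) = 1$. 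Summing over all matchings reproduces the definition \eqref{defx}, giving $\det(\bmi{X}_F - A(F)) = \mathfrak{M}(F, \bmi{x}_F)$.

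The two special cases would then follow by substitution. Taking $\bmi{x}_F = x\bmi{1}_F$ turns $\bmi{X}_F$ into $xI$, so \eqref{1x} yields $\mathscr{M}(F, x) = \det(xI - A(F)) = \varphi(A(F), x)$. For the Laplacian version I would first observe that the computation above used the off-diagonal entries of $A(F)$ only through the symmetric products $B_{vw}B_{wv}$; since replacing $A(F)$ by $-A(F)$ leaves each such product unchanged, the identity also holds in the form $\mathfrak{M}(F, \bmi{x}_F) = \det(\bmi{X}_F + A(F))$. Substituting $\bmi{x}_F = x\bmi{1}_F - \bmi{d}_F$ then gives, via \eqref{lapx},
\[\mathscr{L\hspace{-0.7mm}M}(F, x) = \det\big((xI - D(F)) + A(F)\big) = \det(xI - L(F)) = \varphi(L(F), x).\]

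I expect the only genuine obstacle to be the cycle-decomposition step: one must argue carefully that the acyclicity of $F$ eliminates all cyclic factors of length at least three and that the surviving transpositions are genuinely vertex-disjoint edges of $F$, so that they assemble into a bona fide matching carrying the correct sign. The sign bookkeeping (that the signature of a product of $|M|$ disjoint transpositions is $(-1)^{|M|}$) and the $A \leftrightarrow -A$ symmetry needed to pass from the signless Laplacian $Q(F) = D(F) + A(F)$ to $L(F)$ are routine once this structural point is settled.
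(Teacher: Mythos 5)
Your proof is correct, but it takes a genuinely different route from the paper. The paper proceeds by induction on $|E(F)|$: it picks a leaf $u$ with unique neighbour $v$, expands $\det(\bmi{X}_F-A(F))$ along the row of $u$, and matches the resulting two-term recurrence with Lemma \ref{basiclemma}; the multivariate identity then drops out from the induction hypothesis, and the two specializations are read off from \eqref{1x} and \eqref{lapx}. You instead expand the determinant by the Leibniz formula and classify the contributing permutations via their cycle decomposition, observing that acyclicity of $F$ kills every cycle of length at least $3$, so only products of disjoint transpositions (i.e.\ matchings) survive — this is the multivariate version of Sachs' classical argument. What the paper's induction buys is brevity, since Lemma \ref{basiclemma} is already in place and the same leaf-removal pattern recurs throughout the paper. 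What your argument buys is self-containedness and, more importantly, transparency on a point the paper glosses over: substituting $\bmi{x}_F=x\bmi{1}_F-\bmi{d}_F$ into the stated formula $\det(\bmi{X}_F-A(F))$ literally yields $\det\bigl(xI-D(F)-A(F)\bigr)=\varphi(Q(F),x)$, the \emph{signless} Laplacian characteristic polynomial, so the claim $\mathscr{L\hspace{-0.7mm}M}(F,x)=\varphi(L(F),x)$ does not follow by substitution alone; one must also know that $\varphi(L(F),x)=\varphi(Q(F),x)$ for the bipartite graph $F$ (a fact the paper only cites later, in the proof of Corollary \ref{geqsignless}), or argue as you do that the Leibniz expansion sees the off-diagonal entries only through the products $B_{vw}B_{wv}$, so $\det(\bmi{X}_F-A(F))=\det(\bmi{X}_F+A(F))$. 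Your explicit handling of this sign issue is a genuine improvement in rigor over the paper's ``immediately follows'' remark.
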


\begin{proof}
We prove that  $\mathfrak{M}(F, \bmi{x}_F)=\det(\bmi{X}_F-A(F))$  by induction on   $|E(F)|$.
The equality  is trivially valid if   $|E(F)|=0$.
So, assume that   $|E(F)|\geq1$. As $F$ is a forest, we may   consider two vertices  $u, v\in V(F)$ with $N_F(u)=\{v\}$.
Without loss of generality, we may assume that the first row and column of $A(F)$ are
corresponding to  $u$ and  the second  row and column of $A(F)$ are
corresponding to  $v$.
Expanding the determinant of $\bmi{X}_F-A(F)$ along its  first row, we obtain  by the induction hypothesis and Lemma \ref{basiclemma} that
\begin{align*}
\det\big(\bmi{X}_F-A(F)\big)&=x_u\det\big(\bmi{X}_{F-u}-A(F-u)\big)-\det\big(\bmi{X}_{F-u-v}-A(F-u-v)\big)\\&=x_u\mathfrak{M}(F-u, \bmi{x}_{F-u})-\mathfrak{M}(F-u-v, \bmi{x}_{F-u-v})\\&=\mathfrak{M}(F, \bmi{x}_F),
\end{align*}
as desired. The `in particular' statement  immediately  follows     from \eqref{1x} and \eqref{lapx}.
\end{proof}

\begin{corollary}
For a tree  $T$,  the  multiplicity of $0$ as a root  of $\mathscr{L\hspace{-0.7mm}M}(T, x)$ is $1$.
\end{corollary}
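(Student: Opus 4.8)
The plan is to reduce the statement to the spectral theory of the Laplacian matrix. By Theorem \ref{treeequal}, for the tree $T$ one has $\mathscr{L\hspace{-0.7mm}M}(T, x) = \varphi(L(T), x)$, so the multiplicity of $0$ as a root of $\mathscr{L\hspace{-0.7mm}M}(T, x)$ coincides with the algebraic multiplicity of $0$ as an eigenvalue of $L(T)$. Since $L(T)$ is a real symmetric matrix, it is diagonalizable, and hence its algebraic and geometric multiplicities agree. It therefore suffices to show that $\dim\ker L(T) = 1$.

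To compute this kernel I would use the quadratic form associated with $L(T) = D(T) - A(T)$. For any vector $\bmi{f} = (f_v)_{v\in V(T)}$, a direct expansion gives
$$\bmi{f}^{\top} L(T)\, \bmi{f} = \sum_{\{u, v\}\in E(T)} (f_u - f_v)^2.$$
In particular $L(T)$ is positive semidefinite, and $L(T)\bmi{f} = \bmi{0}$ forces each summand to vanish, that is, $f_u = f_v$ for every edge $\{u, v\}\in E(T)$. Because $T$ is connected, equality along edges propagates along paths and yields that $\bmi{f}$ is constant on all of $V(T)$. Hence $\ker L(T)$ is exactly the line spanned by $\bmi{1}_T$, which is one-dimensional, and so the multiplicity of $0$ equals $1$.

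The argument is short and I do not anticipate a serious obstacle; the only point requiring care is the passage from algebraic to geometric multiplicity, which is precisely where the symmetry of $L(T)$ is used, and the propagation of the constancy of $\bmi{f}$ across the connected tree. An alternative route avoiding the quadratic form would instead start from Corollary \ref{XrootrelationX}: since $|E(T)| = |V(T)| - 1$ for a tree, that corollary specializes to $\mathscr{L\hspace{-0.7mm}M}(T, x^2) = x\,\mathscr{M}\big(S(T), x\big)$, so the multiplicity of $0$ in $\mathscr{L\hspace{-0.7mm}M}(T, x)$ could be read off from that of $0$ in $\mathscr{M}(S(T), x)$. However, this detour requires controlling the matching number of the subdivision $S(T)$, and is less direct than the Laplacian computation, so I would keep the spectral argument as the main line of proof.
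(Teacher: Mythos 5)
Your proof is correct and follows essentially the same route as the paper: both reduce the statement via Theorem \ref{treeequal} to the multiplicity of $0$ as an eigenvalue of $L(T)$. The only difference is that the paper cites the standard fact that this multiplicity equals the number of connected components (Proposition 1.3.7 of Brouwer--Haemers), whereas you in-line its textbook proof via the quadratic form $\bmi{f}^{\top} L(T)\,\bmi{f} = \sum_{\{u,v\}\in E(T)} (f_u - f_v)^2$ and the connectedness of $T$.
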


\begin{proof}
It is well known that the number of connected components of a  graph  $\mathnormal{\Gamma}$ is equal to the  multiplicity of  $0$ as a root of $\varphi(L(\mathnormal{\Gamma}), x)$  \cite[Proposition 1.3.7]{Brouwer}.
So, the result follows from    $\mathscr{L\hspace{-0.7mm}M}(T,x)=\varphi(L(T), x)$  which is given in Theorem   \ref{treeequal}.
\end{proof}

\section{The largest zero of the Laplacian matching polynomial}\label{sec3}

The purpose of this section is to investigate  the location of the largest zero of the Laplacian matching polynomial.
We give
a linear algebraic approach to study
the largest zero of the Laplacian matching polynomial  and  present   sharp  upper  and  lower bounds on it.
The assertions   \eqref{Lmatchingthm1} and \eqref{Lmatchingthm3}     are also proved in this section based on the  linear algebraic approach.

Let $G$ be a connected graph and   $u\in V(G)$. Let $T(G,u)$ be the path-tree of $G$ respect to  the vertex  $u$ which is introduced  in Section \ref{sec1}.
Consider two vectors        $\bmi{x}_G=(x_v)_{v\in V(G)}$ and  $\bmi{x}_{T(G,u)}=(x_P)_{P\in V(T(G,u))}$   of indeterminates associated with  $G$ and $T(G,u)$, respectively.
For  every  vertex       $P\in V(T(G, u))$, we  may  identify  $x_P$ with  $x_{\upsilon(P)}$ in which  $\upsilon(P)$  is the terminal vertex of the path $P$ in $G$.
In such way,    $G$ and  $T(G,u)$ will be equipped with two vectors consisting of the same indeterminates, which are simply denoted by $\bmi{x}$ when  there is no ambiguity.
In  what follows,   for every  subgraph $H$ of $G$ and vertex $u\in V(H)$,
we   denote by  $D_G(T(H, u))$
the  diagonal matrix
whose rows and columns are indexed by $V(T(H, u))$ and the $(P,  P)$-entry is
$d_G(\upsilon(P))$.

The univariate version of the following theorem, which is proved by Godsil  \cite{Godsil2}, has a key role in  the theory of the matching polynomial.
Notice  that, for a graph $G$ and a  vertex $u\in V(G)$,     $u$ is   a path   in $G$  and the corresponding vertex in  $T(G, u)$  will also be referred to as $u$.

\begin{theorem}[Amini \cite{Amini}]\label{d1}
Let $G$ be a connected graph and  let   $u\in V(G)$.
Then
$$\frac{\mathfrak{M}(G-u,\bmi{x})}{\mathfrak{M}(G, \bmi{x})}=\frac{\mathfrak{M}(T(G,u)-u, \bmi{x})}{\mathfrak{M}(T(G,u), \bmi{x})},$$
and moreover, $\mathfrak{M}(G,\bmi{x})$ divides $\mathfrak{M}(T(G,u), \bmi{x})$.
\end{theorem}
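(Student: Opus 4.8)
The plan is to prove both assertions simultaneously by strong induction on $|V(G)|$, working in the polynomial ring $\mathbb{Z}[\bmi{x}]$, which is a unique factorization domain so that the divisibility statement is meaningful. The base case $|V(G)|=1$ is immediate, since then $G$ and $T(G,u)$ both reduce to the single vertex $u$ and both ratios equal $1/x_u$. The combinatorial core of the inductive step is a decomposition of the path-tree. For $|V(G)|\geq 2$, the root $u$ of $T(G,u)$ is adjacent precisely to the length-one paths $\langle u,w\rangle$ with $w\in N_G(u)$, and deleting $u$ splits $T(G,u)$ into the subtrees hanging below these paths. Dropping the initial vertex $u$ from a path $\langle u,w,\dots\rangle$ is a label-preserving isomorphism from the subtree rooted at $\langle u,w\rangle$ onto $T(G-u,w)$, because such a path corresponds bijectively to a path in $G-u$ starting at $w$ and the maximal-proper-subpath relation is preserved. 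Since $\mathfrak{M}$ is multiplicative over disjoint unions, this yields
$$\mathfrak{M}\big(T(G,u)-u,\bmi{x}\big)=\prod_{w\in N_G(u)}\mathfrak{M}\big(T(G-u,w),\bmi{x}\big),$$
together with the analogous factorization after additionally deleting one child $\langle u,w_0\rangle$.

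For the ratio identity I would apply Lemma \ref{basiclemma} at the vertex $u$ in $G$ and at the root $u$ in the tree $T(G,u)$. Dividing the first relation by $\mathfrak{M}(G-u,\bmi{x})$ and the second by $\mathfrak{M}(T(G,u)-u,\bmi{x})$, and simplifying the tree side via the factorizations above, reduces the two reciprocal ratios $\mathfrak{M}(G,\bmi{x})/\mathfrak{M}(G-u,\bmi{x})$ and $\mathfrak{M}(T(G,u),\bmi{x})/\mathfrak{M}(T(G,u)-u,\bmi{x})$ to the identical continued-fraction expression $x_u-\sum_{w\in N_G(u)}r_w$, where $r_w$ is the corresponding ratio for the pair $(G-u,w)$. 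As a path starting at $w$ remains in the component $C_w$ of $G-u$ containing $w$, one has $T(G-u,w)=T(C_w,w)$, and by multiplicativity the ratio for $G-u$ collapses to the ratio for the smaller connected graph $C_w$; the induction hypothesis then identifies the two families of $r_w$, which proves the identity (if $N_G(u)=\varnothing$ the sum is empty and both ratios equal $1/x_u$).

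For the divisibility, cross-multiplying the ratio identity gives $\mathfrak{M}(G,\bmi{x})\,\mathfrak{M}(T(G,u)-u,\bmi{x})=\mathfrak{M}(G-u,\bmi{x})\,\mathfrak{M}(T(G,u),\bmi{x})$, so it suffices to show that $\mathfrak{M}(G-u,\bmi{x})$ divides $\mathfrak{M}(T(G,u)-u,\bmi{x})$; the resulting quotient then exhibits $\mathfrak{M}(G,\bmi{x})$ as a divisor of $\mathfrak{M}(T(G,u),\bmi{x})$. Writing $\mathfrak{M}(G-u,\bmi{x})=\prod_i\mathfrak{M}(C_i,\bmi{x})$ over the components $C_i$ of $G-u$, and using that connectivity of $G$ forces each $C_i$ to contain some neighbor $w_i$ of $u$, the inductive divisibility for the smaller connected graph $C_i$ gives $\mathfrak{M}(C_i,\bmi{x})\mid\mathfrak{M}(T(C_i,w_i),\bmi{x})$, and $\mathfrak{M}(T(C_i,w_i),\bmi{x})$ is one of the factors in the product for $\mathfrak{M}(T(G,u)-u,\bmi{x})$. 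Multiplying these component-wise divisibilities over $i$ completes the argument. The step I expect to be the main obstacle is precisely this divisibility bookkeeping: one must confirm that every component of $G-u$ is reached by at least one neighbor of $u$, and that selecting a single distinguished neighbor per component correctly accounts for the multiplicity of each factor $\mathfrak{M}(C_i,\bmi{x})$, so that the quotient $\mathfrak{M}(T(G,u)-u,\bmi{x})/\mathfrak{M}(G-u,\bmi{x})$ genuinely lies in $\mathbb{Z}[\bmi{x}]$ rather than merely in its field of fractions.
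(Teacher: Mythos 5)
Your proof is correct. The paper itself offers no proof of this theorem --- it is imported from Amini \cite{Amini} --- and your argument (induction on $|V(G)|$ via the root recursion of Lemma \ref{basiclemma}, the label-preserving decomposition of $T(G,u)-u$ into the path-trees $T(C_w,w)$ of the components $C_w$ of $G-u$, and the cross-multiplication step reducing the divisibility claim to component-wise divisibility) is precisely the standard Godsil-style induction that the cited source adapts to the multivariate setting, so your route coincides with the intended one, including the careful handling of components containing several neighbors of $u$.
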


For a connected graph  $G$ and a  vertex   $u\in V(G)$,   Theorem \ref{d1} and    Theorem  \ref{treeequal}  yield   that $\mathscr{M}(G, x)$ divides $\varphi(A(T(G,u)), x)$.
Since   all roots of  the characteristic polynomial of a   symmetric matrix are  real, the first statement in  \eqref{thmmatching1} is obtained   as an application of Theorem \ref{d1}.
For  the Laplacian  matching polynomial,
we get  the following   result.

\begin{corollary}\label{divide}
Let $G$ be a  connected  graph, $H$  be a   subgraph  of $G$,  and   $u\in V(H)$.
If $H$ is connected, then
$\mathfrak{M}(H, x\bmi{1}_H-\bmi{d}_{G, H})$ divides  $\varphi(D_G(T(H, u))+A(T(H, u)), x)$.
In particular, $\varphi(D_G(T(G, u))+A(T(G,u)), x)$ is divisible by $\mathscr{L\hspace{-0.7mm}M}(G, x)$
for  every vertex $u\in V(G)$.
\end{corollary}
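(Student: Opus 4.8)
The plan is to descend from the multivariate divisibility furnished by Theorem \ref{d1} to the univariate divisibility we want, by specializing the indeterminates through $x_v \mapsto x - d_G(v)$. First I would apply Theorem \ref{d1} to the connected graph $H$ and the vertex $u \in V(H)$; since $H$ is connected, the theorem asserts that $\mathfrak{M}(H, \bmi{x})$ divides $\mathfrak{M}(T(H,u), \bmi{x})$ in the polynomial ring over the indeterminates $(x_v)_{v \in V(H)}$. Here I rely on the identification $x_P = x_{\upsilon(P)}$ set up before Theorem \ref{d1}, so that both multivariate matching polynomials are written in these same indeterminates. Thus there is a polynomial $Q(\bmi{x})$ with $\mathfrak{M}(T(H,u), \bmi{x}) = \mathfrak{M}(H, \bmi{x}) \, Q(\bmi{x})$.

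Next I would rewrite the path-tree side determinantally. Since $T(H,u)$ is a tree, Theorem \ref{treeequal} gives $\mathfrak{M}(T(H,u), \bmi{x}) = \det(\bmi{X}_{T(H,u)} - A(T(H,u)))$, where $\bmi{X}_{T(H,u)}$ is diagonal with $(P,P)$-entry $x_P = x_{\upsilon(P)}$. The heart of the argument is the substitution $x_v = x - d_G(v)$ for every $v \in V(H)$. Under it the $(P,P)$-entry of $\bmi{X}_{T(H,u)}$ becomes $x - d_G(\upsilon(P))$, so that $\bmi{X}_{T(H,u)}$ turns into $xI - D_G(T(H,u))$ by the very definition of $D_G(T(H,u))$; consequently
\[ \det\bigl(\bmi{X}_{T(H,u)} - A(T(H,u))\bigr) \longmapsto \det\bigl(xI - (D_G(T(H,u)) + A(T(H,u)))\bigr) = \varphi\bigl(D_G(T(H,u)) + A(T(H,u)), x\bigr). \]
The same substitution sends $\mathfrak{M}(H, \bmi{x})$ to $\mathfrak{M}(H, x\bmi{1}_H - \bmi{d}_{G,H})$.

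Then I would invoke the fact that this specialization is a ring homomorphism, and hence preserves products: applying it to the factorization $\mathfrak{M}(T(H,u), \bmi{x}) = \mathfrak{M}(H, \bmi{x}) \, Q(\bmi{x})$ yields
\[ \varphi\bigl(D_G(T(H,u)) + A(T(H,u)), x\bigr) = \mathfrak{M}(H, x\bmi{1}_H - \bmi{d}_{G,H}) \cdot Q(x\bmi{1}_H - \bmi{d}_{G,H}), \]
and since the last factor lies in $\mathbb{R}[x]$, this is exactly the asserted divisibility in the single variable $x$. Finally, for the ``in particular'' statement I would take $H = G$ and apply \eqref{lapx}, namely $\mathfrak{M}(G, x\bmi{1}_G - \bmi{d}_G) = \mathscr{L\hspace{-0.7mm}M}(G, x)$, to conclude that $\mathscr{L\hspace{-0.7mm}M}(G,x)$ divides $\varphi(D_G(T(G, u)) + A(T(G,u)), x)$ for every $u \in V(G)$.

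I expect the only delicate point, rather than a genuine obstacle, to be keeping the identification of indeterminates consistent throughout: one must verify that under $x_P = x_{\upsilon(P)}$ the diagonal entry attached to a path $P$ depends only on its terminal vertex, so that a single substitution simultaneously realizes $\bmi{X}_{T(H,u)}$ as $xI - D_G(T(H,u))$ and $\mathfrak{M}(H, \bmi{x})$ as $\mathfrak{M}(H, x\bmi{1}_H - \bmi{d}_{G,H})$. Once this bookkeeping is settled, the divisibility descends for free from the multivariate statement, since every step is a factorization transported along a homomorphism of polynomial rings.
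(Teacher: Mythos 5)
Your proposal is correct and follows essentially the same route as the paper: apply Theorem \ref{d1} to the connected subgraph $H$, use Theorem \ref{treeequal} on the tree $T(H,u)$ to get the determinantal form, and specialize $x_v \mapsto x - d_G(v)$ to turn the multivariate divisibility into the univariate one, with the ``in particular'' statement following from \eqref{lapx}. The only difference is that you make explicit the ring-homomorphism argument (transporting the factorization along the specialization) and the bookkeeping $x_P = x_{\upsilon(P)}$, which the paper leaves implicit.
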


\begin{proof}
By Theorem \ref{d1}, we find that
$\mathfrak{M}(H, x\bmi{1}_H-\bmi{d}_{G, H})$ divides $\mathfrak{M}(T(H, u), x\bmi{1}_H-\bmi{d}_{G, H})$.
It follows from Theorem  \ref{treeequal} that
\begin{align*}\mathfrak{M}\big(T(H, u), x\bmi{1}_H-\bmi{d}_{G, H}\big)&=\det\Big(xI-D_G\big(T(H, u)\big)-A\big(T(H, u)\big)\Big)\\&=\varphi\Big(D_G\big(T(H, u)\big)+A\big(T(H, u)\big), x\Big),\end{align*}
which establishes  what we require.
Since     $\mathfrak{M}(G, x\bmi{1}_G-\bmi{d}_G)=\mathscr{L\hspace{-0.7mm}M}(G, x)$   using    \eqref{lapx}, the `in particular' statement  immediately  follows.
\end{proof}

\begin{remark}
The matrix $D_G(T(G, u))+A(T(G,u))$, which   appeared     in  Corollary \ref{divide},  is a  symmetric diagonally dominant  matrix with nonnegative diagonal entries, so   all of its eigenvalues are  nonnegative real numbers.
Hence, Corollary \ref{divide} gives us  another proof  for the fact that  all roots  of the Laplacian  matching polynomial  are real and  nonnegative which  was    also proved in Corollary \ref{XrootrelationX}.
\end{remark}

It is    well known   that the largest zero of the matching polynomial of a graph is equal to the largest eigenvalue of the adjacency matrix of a path-tree of that graph. This fact is obtained by combining the Perron--Frobenius theorem  \cite[Theorem 2.2.1]{Brouwer} and   Theorems    \ref{treeequal}   and \ref{d1}.
The following theorem  can be considered  as  an analogue of the fact.
Indeed, the following theorem   presents    a linear algebra technique  to treat  with  the largest zero of the Laplacian matching polynomial.

\begin{theorem}\label{largestzero}
Let $G$ be a  connected  graph, $H$  be a   subgraph  of $G$,  and   $u\in V(H)$.
If $H$ is connected, then
\begin{equation}\label{largex}\lambda\big(\mathfrak{M}(H, x\bmi{1}_H-\bmi{d}_{G, H})\big)=\lambda\Big(D_G\big(T(H, u)\big)+A\big(T(H, u)\big)\Big).\end{equation}
In particular,
$\lambda(\mathscr{L\hspace{-0.7mm}M}(G, x))=\lambda(D_G(T(G, u))+A(T(G, u)))$. Also, the largest root  of $\mathscr{L\hspace{-0.7mm}M}(G, x)$ has the  multiplicity   $1$.
\end{theorem}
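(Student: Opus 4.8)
The plan is to build everything on the divisibility relation of Corollary~\ref{divide} together with the Perron--Frobenius theorem. Write $p(x)=\mathfrak{M}(H,x\bmi{1}_H-\bmi{d}_{G,H})$ and $M=D_G(T(H,u))+A(T(H,u))$, and recall from the proof of Corollary~\ref{divide} that $q(x):=\varphi(M,x)=\mathfrak{M}(T(H,u),x\bmi{1}_H-\bmi{d}_{G,H})$. Corollary~\ref{divide} gives that $p$ divides $q$, so $q=p\,r$ for some polynomial $r$, and in particular every root of $p$ is an eigenvalue of $M$; hence $\lambda(p)\le\lambda(M)$ at once. The whole theorem will follow once I show that $\lambda(M)$ is in fact a root of $p$ and that it is not a root of $r$.

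First I would record the relevant features of $M$. Since $H$ is connected, the path-tree $T(H,u)$ is a connected tree, so $A(T(H,u))$ is irreducible; adding the nonnegative diagonal matrix $D_G(T(H,u))$ leaves $M$ an irreducible, symmetric, entrywise nonnegative matrix. By the Perron--Frobenius theorem its spectral radius $\lambda(M)$ is a \emph{simple} eigenvalue, i.e. a simple root of $q$, and every proper principal submatrix of $M$ has spectral radius \emph{strictly} smaller than $\lambda(M)$.

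The crux is to pin down $r=q/p$ and to show $r(\lambda(M))\ne 0$. Here I would invoke Theorem~\ref{d1} with the graph $H$ and vertex $u$: cross-multiplying the displayed identity and specializing $\bmi{x}=x\bmi{1}_H-\bmi{d}_{G,H}$ yields
\begin{equation*}
\mathfrak{M}(H-u,x\bmi{1}-\bmi{d})\,q(x)=p(x)\,\mathfrak{M}\big(T(H,u)-u,x\bmi{1}-\bmi{d}\big).
\end{equation*}
The forest $T(H,u)-u$ carries precisely the principal submatrix $M_{\hat u}$ of $M$ obtained by deleting the row and column indexed by $u$, so Theorem~\ref{treeequal} identifies its multivariate matching polynomial with $\varphi(M_{\hat u},x)$. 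Therefore $r(x)=q(x)/p(x)=\varphi(M_{\hat u},x)/\mathfrak{M}(H-u,x\bmi{1}-\bmi{d})$, and since $r$ is a polynomial it divides $\varphi(M_{\hat u},x)$. Every root of $r$ is thus an eigenvalue of the proper principal submatrix $M_{\hat u}$, hence strictly below $\lambda(M)$; in particular $r(\lambda(M))\ne 0$.

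Finally I would assemble the pieces. Because $\lambda(M)$ is a root of $q=p\,r$ but not of $r$, it must be a root of $p$, giving $\lambda(p)\ge\lambda(M)$ and hence \eqref{largex}; the ``in particular'' statement is the case $H=G$ via \eqref{lapx}. For the multiplicity claim, note that $\lambda(M)$ is a simple root of $q$ while $r(\lambda(M))\ne0$, so its multiplicity as a root of $p$ equals its multiplicity as a root of $q$, namely $1$. The step I expect to be most delicate is the identification of $r$ with $\varphi(M_{\hat u},x)/\mathfrak{M}(H-u,x\bmi{1}-\bmi{d})$: it requires matching the combinatorial deletion $T(H,u)-u$ with the linear-algebraic deletion of the $u$-indexed row and column, and it is there that Theorem~\ref{d1} does the essential work, reducing the problem to the standard Perron--Frobenius fact that a proper principal submatrix of an irreducible nonnegative matrix has strictly smaller spectral radius.
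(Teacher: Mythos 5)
Your proof is correct, and it takes a genuinely different route from the paper's. The paper proves \eqref{largex} by induction on $|V(H)|$: it first establishes the strict inequality $\lambda\big(\mathfrak{M}(H-u, x\bmi{1}-\bmi{d})\big)<\lambda\big(\mathfrak{M}(H, x\bmi{1}-\bmi{d})\big)$ by passing to the subdivision graph (Theorem \ref{subdivisionlemma} combined with the vertex-deletion interlacing \eqref{thmmatching2} for ordinary matching polynomials), then applies the induction hypothesis to the components of $H-u$, assembles the corresponding block-diagonal principal submatrix of $M=D_G(T(H,u))+A(T(H,u))$, and uses Cauchy interlacing together with Corollary \ref{divide} to force $\lambda\big(\mathfrak{M}(H, x\bmi{1}-\bmi{d})\big)$ up to the top eigenvalue of $M$. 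You avoid both the induction and the subdivision machinery entirely: your key move is to cross-multiply Theorem \ref{d1}, use Theorem \ref{treeequal} to identify $\mathfrak{M}\big(T(H,u)-u, x\bmi{1}-\bmi{d}\big)$ with $\varphi(M_{\hat{u}},x)$ (this identification is indeed valid, since deleting the vertex $u$ from the path-tree corresponds exactly to deleting the $u$-indexed row and column of $M$), and conclude that the cofactor $r=\varphi(M,x)/\mathfrak{M}(H,x\bmi{1}_H-\bmi{d}_{G,H})$ divides $\varphi(M_{\hat{u}},x)$; the Perron--Frobenius fact that a proper principal submatrix of an irreducible nonnegative matrix has strictly smaller spectral radius --- the same fact the paper itself invokes later in the proof of Theorem \ref{multi} --- then gives $r(\lambda(M))\neq 0$, and everything follows. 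Your approach buys brevity and independence from Section \ref{sec2}: it needs neither Theorem \ref{subdivisionlemma} nor Godsil's interlacing \eqref{thmmatching2}, and it yields slightly more, namely that \emph{all} roots of the quotient $\varphi(M,x)/\mathfrak{M}(H,x\bmi{1}_H-\bmi{d}_{G,H})$ lie strictly below $\lambda(M)$ and that the largest root of $\mathfrak{M}(H,x\bmi{1}_H-\bmi{d}_{G,H})$ is simple for every connected subgraph $H$, not only for $H=G$. The paper's route, in exchange, produces the intermediate strict inequality \eqref{claimx}, a Laplacian analogue of \eqref{thmmatching2} that is of independent interest and is not delivered by your argument.
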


\begin{proof}
We prove  \eqref{largex}  by induction on  $|V(H)|$.
Clearly,  \eqref{largex}       is valid  for   $|V(H)|=1$.
Assume that  $|V(H)|\geq2$.
We  first  show  that
\begin{equation}\label{claimx}\lambda\big(\mathfrak{M}(H-u, x\bmi{1}_{H-u}-\bmi{d}_{G, H-u})\big)<\lambda\big(\mathfrak{M}(H, x\bmi{1}_H-\bmi{d}_{G, H})\big).\end{equation}
To see  \eqref{claimx},  we apply     Theorem \ref{subdivisionlemma} and \eqref{thmmatching2} to get that
\begin{align*}
\lambda\big(\mathfrak{M}(H-u, x^2\bmi{1}_{H-u}-\bmi{d}_{G, H-u})\big)&=\lambda\Big(\mathscr{M}\big(S(G)-W-u, x\big)\Big)\\
&<\lambda\Big(\mathscr{M}\big(S(G)-W, x\big)\Big)\\
&=\lambda\big(\mathfrak{M}(H, x^2\bmi{1}_H-\bmi{d}_{G, H})\big),
\end{align*}
where $W=V(G)\setminus V(H)$.
This clearly  proves    \eqref{claimx}.
Now, let $N_H(u)=\{u_1, \ldots, u_k\}$ and let  $H_i$ be the connected component  of $H-u$ containing $u_i$
for $i=1, \ldots, k$. By the  induction hypothesis,
\begin{equation}\label{largexfori}\lambda\big(\mathfrak{M}(H_i, x\bmi{1}_{H_i}-\bmi{d}_{G, H_i})\big)=\lambda\Big(D_G\big(T(H_i, u_i)\big)+A\big(T(H_i, u_i)\big)\Big)\end{equation}
for   $i=1, \ldots, k$.
It is not hard to see  the $k\times k$  block diagonal matrix whose $i$th block diagonal entry is $D_G(T(H_i, u_i))+A(T(H_i, u_i))$,  say  $R$,
is a   principal     submatrix of
$D_G(T(H, u))+A(T(H, u))$ with size $|T(H, u)|-1$.
Hence,  by the interlacing theorem \cite[Corollary 2.5.2]{Brouwer}, it follows that
$\lambda(R)$ is  greater than or equal to the second largest eigenvalue   of $D_G(T(H, u))+A(T(H, u))$.
Further, it follows from   \eqref{largexfori}  and  \eqref{claimx}  that
\begin{align*}\lambda(R)&=\max\Big\{\lambda\big(\mathfrak{M}(H_i, x\bmi{1}_{H_i}-\bmi{d}_{G, H_i})\big) \, \Big|  \,  1\leq i\leq k\Big\}\\
&=\lambda\big(\mathfrak{M}(H-u, x\bmi{1}_{H-u}-\bmi{d}_{G, H-u})\big)\\
&<\lambda\big(\mathfrak{M}(H, x\bmi{1}_H-\bmi{d}_{G, H})\big).\end{align*}
Thus,    $\lambda(\mathfrak{M}(H, x\bmi{1}_H-\bmi{d}_{G, H}))$ is strictly  greater than the second largest eigenvalue   of $D_G(T(H, u))+A(T(H, u))$. On  the other hand,
Corollary \ref{divide}  implies  that $\lambda(\mathfrak{M}(H, x\bmi{1}_H-\bmi{d}_{G, H}))$ is  a zero    of $\varphi(D_G(T(H, u))+A(T(H,u)), x)$. So, we conclude that     $\lambda(\mathfrak{M}(H, x\bmi{1}_H-\bmi{d}_{G, H}))$ is the largest eigenvalue   of $D_G(T(H, u))+A(T(H, u))$. This  completes   the induction step and  demonstrates that      \eqref{largex} holds.

For  the  `in particular' statement,  note that    \eqref{largex} and   \eqref{lapx} yield that
$$\lambda\Big(D_G\big(T(G, u)\big)+A\big(T(G, u)\big)\Big)=\lambda\big(\mathfrak{M}(G, x\bmi{1}_G-\bmi{d}_G)\big)=\lambda\big(\mathscr{L\hspace{-0.7mm}M}(G, x)\big),$$ and further,
the connectedness of  $G$   implies   that     $D_G(T(G, u))+A(T(G, u))$ is an    irreducible matrix with nonnegative entries,   and  consequently,
its largest eigenvalue  has the multiplicity $1$  by the Perron--Frobenius theorem  \cite[Theorem 2.2.1]{Brouwer}.
\end{proof}

\begin{corollary}\label{geqsignless}
Let $G$ be a connected graph and $u\in V(G)$. Then
\begin{equation}\label{eX}\lambda\big(\mathscr{L\hspace{-0.7mm}M}(G, x)\big)\geq\lambda\Big(L\big(T(G, u)\big)\Big)\end{equation}
with the   equality holds if and only  if   $G$ is a tree.
\end{corollary}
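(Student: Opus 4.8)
The plan is to combine the spectral identity of Theorem~\ref{largestzero} with an entrywise comparison of two nonnegative matrices, and then to extract the equality case from Perron--Frobenius theory. Write $T=T(G,u)$, and for $P\in V(T)$ let $\upsilon(P)$ be the terminal vertex of the path $P$. By the ``in particular'' clause of Theorem~\ref{largestzero}, $\lambda(\mathscr{L\hspace{-0.7mm}M}(G,x))=\lambda(D_G(T)+A(T))$. First I would compare the diagonal matrix $D_G(T)$, whose $(P,P)$-entry is $d_G(\upsilon(P))$, with the ordinary degree matrix $D(T)$ of the tree $T$, and show that
\[
d_T(P)\le d_G\big(\upsilon(P)\big)\qquad\text{for every }P\in V(T).
\]
This follows by listing the neighbours of $P$ in $T$: the unique maximal proper subpath of $P$ (present precisely when $P\ne u$), together with the extensions $Pw$ for $w\in N_G(\upsilon(P))\setminus V(P)$. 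When $P\ne u$ the predecessor of $\upsilon(P)$ already lies in $N_G(\upsilon(P))\cap V(P)$, so $d_T(P)=1+|N_G(\upsilon(P))\setminus V(P)|\le d_G(\upsilon(P))$, and for $P=u$ the bound is an equality. Hence $\Delta:=D_G(T)-D(T)$ is a nonnegative diagonal matrix.

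The second step is the algebraic core. Denoting by $Q(T)=D(T)+A(T)$ the signless Laplacian of $T$, we have the exact matrix identity $D_G(T)+A(T)=Q(T)+\Delta$. Since $T$ is a tree it is bipartite, so conjugating by the $\pm1$ signature matrix $S$ of its bipartition yields $S\,Q(T)\,S=D(T)-A(T)=L(T)$; thus $Q(T)$ and $L(T)$ are similar and $\lambda(Q(T))=\lambda(L(T))$. Provided $|V(G)|\ge2$ the tree $T$ is connected with at least one edge, so $Q(T)$ is an irreducible nonnegative matrix, and Perron--Frobenius monotonicity \cite[Theorem 2.2.1]{Brouwer} gives
\[
\lambda\big(\mathscr{L\hspace{-0.7mm}M}(G,x)\big)=\lambda\big(Q(T)+\Delta\big)\ge\lambda\big(Q(T)\big)=\lambda\big(L(T)\big),
\]
with equality if and only if $\Delta=0$; the degenerate case $|V(G)|=1$ is trivial.

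It remains to prove that $\Delta=0$ exactly when $G$ is a tree, and I expect this to be the main obstacle. If $G$ is a tree, then for every path $P$ the only neighbour of $\upsilon(P)$ lying on $P$ is its predecessor, since any other would close a cycle; equality then holds in the degree bound for all $P$, so $\Delta=0$. For the converse, suppose $G$ is not a tree, so, being connected, it contains a cycle $c_0c_1\cdots c_{m-1}c_0$ with $m\ge3$. Choosing a \emph{shortest} path from $u$ to the cycle, which therefore meets the cycle only at its endpoint, say $c_0$, I would concatenate it with $c_1,\dots,c_{m-1}$ to obtain a simple path $P$ terminating at $c_{m-1}$. Here $c_{m-1}$ has the neighbour $c_0\in V(P)$, which is distinct from its predecessor $c_{m-2}$ because $m\ge3$; hence $d_T(P)<d_G(c_{m-1})$ and $\Delta\ne0$, forcing the inequality to be strict. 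Combining the two directions gives equality if and only if $G$ is a tree. The one point demanding care is the simplicity of $P$, which is exactly why a shortest $u$-to-cycle path, internally disjoint from the cycle, must be chosen.
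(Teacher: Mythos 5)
Your proposal is correct and follows essentially the same route as the paper: both write $D_G(T(G,u))+A(T(G,u))$ as the signless Laplacian $Q(T(G,u))$ plus a nonnegative diagonal correction via the degree comparison $d_{T(G,u)}(P)\le d_G(\upsilon(P))$, invoke bipartiteness of trees to identify $\lambda(Q)$ with $\lambda(L)$, apply Perron--Frobenius monotonicity for the inequality and to force the correction to vanish in the equality case, and rule out cycles by exhibiting a path whose terminal vertex has a second neighbour on the path. The only cosmetic difference is the forward direction of the equality case, where the paper cites $G\cong T(G,u)$ together with Theorem~\ref{treeequal} while you verify the degree equalities directly; both are immediate.
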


\begin{proof}
We first recall  the  fact that a graph $\mathnormal{\Gamma}$ is bipartite if and only if $\varphi(L(\mathnormal{\Gamma}), x)=\varphi(Q(\mathnormal{\Gamma}), x)$   \cite[Proposition 1.3.10]{Brouwer}.
For each  $P\in V(T(G, u))$, we   have $d_{T(G, u)}(P)\leq d_G(\upsilon(P))$, where  $\upsilon(P)$ is the terminal vertex of the path $P$ in $G$.
Therefore,  $R=D_G(T(G, u))+A(T(G, u))-Q(T(G, u))$ has  nonnegative entries, and thus,   Theorem \ref{largestzero},   the Perron--Frobenius theorem  \cite[Theorem 2.2.1]{Brouwer}, and the above mentioned fact   yield that
\begin{align}\label{errorx}\nonumber\lambda\big(\mathscr{L\hspace{-0.7mm}M}(G, x)\big)&=\lambda\Big(D_G\big(T(G, u)\big)+A\big(T(G, u)\big)\Big)\\\nonumber
&=\lambda\Big(R+Q\big(T(G, u)\big)\Big)\\
&\geq\lambda\Big(Q\big(T(G, u)\big)\Big)\\\nonumber
&=\lambda\Big(L\big(T(G, u)\big)\Big),\end{align}
proving \eqref{eX}.
If $G$ is a tree,  then $G$ is isometric to $T(G, u)$ and since $\mathscr{L\hspace{-0.7mm}M}(G, x)=\varphi(L(G), x)$ by Theorem \ref{treeequal}, the equality  in  \eqref{eX} is attained.
Conversely, assume that the equality    in  \eqref{eX} holds. Consequently,   the equality    in  \eqref{errorx} occurs, and hence,   the Perron--Frobenius theorem  \cite[Theorem 2.2.1]{Brouwer}  implies that    $R=0$.
This means that  $d_{T(G, u)}(P)=d_G(\upsilon(P))$ for each $P\in V(T(G, u))$.
We assert that $G$ is a tree.
Towards a  contradiction, suppose that  there is a   cycle $C$ in  $G$. As  $G$ is connected, there is a path $P_1$ in $G$ which start
at $u$, none of its  internal vertices is on $C$,  and $\upsilon(P_1)\in V(C)$.  Fix  $w\in N_G(\upsilon(P_1))\cap V(C)$ and let  $P_2$ be  the  path on $C$ between $\upsilon(P_1)$ and $w$ whose  length is more that $1$. If $P$ is  the path  between  $u$ and $w$   formed by  $P_1$ and $P_2$,        then   it is clear that  $d_{T(G, u)}(P)<d_G(\upsilon(P))$. This contradiction completes  the proof.
\end{proof}

In the following consequence, we give  some  lower   bounds on  the largest zero of the Laplacian matching polynomial.

\begin{corollary}
Let $G$ be a connected graph. Then
$$\lambda\big(\mathscr{L\hspace{-0.7mm}M}(G, x)\big)\geq\max\left\{\mathnormal{\Delta}(G)+1, \delta(G)+\sqrt{\mathnormal{\Delta}(G)}\right\}$$
with the   equality holds if and only  if   $G$ is a  star.
\end{corollary}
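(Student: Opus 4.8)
The plan is to translate the statement into spectral language via Theorem \ref{largestzero}, which gives $\lambda(\mathscr{L\hspace{-0.7mm}M}(G,x)) = \lambda(B)$ for the symmetric nonnegative matrix $B = D_G(T(G,u)) + A(T(G,u))$, where I would choose $u$ to be a vertex of maximum degree $\Delta = \Delta(G)$. Writing $\delta = \delta(G)$, the root $u$ then carries diagonal entry $\Delta$ and has exactly $\Delta$ children in the path-tree, namely the length-one paths $u,w$ with $w \in N_G(u)$; these children have diagonal entries $d_G(w) \geq \delta$ and, being leaves of a tree, are pairwise non-adjacent. First I would restrict to the principal submatrix $S$ of $B$ indexed by $u$ and its $\Delta$ children, so that by the interlacing theorem \cite[Corollary 2.5.2]{Brouwer}, $\lambda(B) \geq \lambda(S)$. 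Since replacing each child-diagonal $d_G(w)$ by $\delta$ subtracts only a nonnegative diagonal matrix, $\lambda(S) \geq \lambda(B')$ where
\[
B' = \begin{pmatrix} \Delta & \bmi{1}^{\top} \\ \bmi{1} & \delta I_{\Delta} \end{pmatrix}.
\]

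Next I would compute $\lambda(B')$ explicitly: its Perron eigenvector has the symmetric form $(a,b,\dots,b)$, yielding the quadratic $\lambda^2 - (\Delta+\delta)\lambda + \Delta(\delta-1) = 0$ and hence
\[
\lambda(B') = \frac{\Delta + \delta + \sqrt{(\Delta-\delta)^2 + 4\Delta}}{2}.
\]
Two elementary squaring arguments then complete the lower bound: $\lambda(B') \geq \Delta + 1$ is equivalent to $\sqrt{(\Delta-\delta)^2 + 4\Delta} \geq (\Delta-\delta) + 2$, i.e.\ to $\delta \geq 1$ (always true for a connected graph), while $\lambda(B') \geq \delta + \sqrt{\Delta}$ is equivalent to $\sqrt{(\Delta-\delta)^2+4\Delta} \geq 2\sqrt{\Delta} - (\Delta-\delta)$, i.e.\ to $\Delta \geq \delta$ (always true). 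Chaining everything gives $\lambda(\mathscr{L\hspace{-0.7mm}M}(G,x)) = \lambda(B) \geq \lambda(S) \geq \lambda(B') \geq \max\{\Delta+1,\, \delta+\sqrt{\Delta}\}$.

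For the equality characterization, the \emph{if} direction is a direct computation: a star $K_{1,\Delta}$ is a tree, so $\mathscr{L\hspace{-0.7mm}M}(K_{1,\Delta},x) = \varphi(L(K_{1,\Delta}),x)$ by Theorem \ref{treeequal}, whose largest root is the Laplacian spectral radius $\Delta+1$, which equals $\max\{\Delta+1,\delta+\sqrt{\Delta}\}$ since $\delta = 1$. For the \emph{only if} direction, equality forces every link of the chain to be tight, in particular $\lambda(B) = \lambda(S)$. Here I would invoke the Perron--Frobenius theorem \cite[Theorem 2.2.1]{Brouwer}: since $G$ is connected, $T(G,u)$ is a connected tree and $B$ is irreducible and nonnegative, so its leading eigenvector is strictly positive. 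If $S$ were a proper principal submatrix achieving $\lambda(S)=\lambda(B)$, extending a nonnegative Perron eigenvector of $S$ by zeros would produce a nonnegative maximizer of the Rayleigh quotient of $B$, hence a $\lambda(B)$-eigenvector vanishing at some coordinate, contradicting strict positivity. Thus $S = B$, meaning $T(G,u)$ has no vertices beyond $u$ and its children; this forces every neighbor of $u$ to have no further neighbor, and connectedness then gives $G = K_{1,\Delta}$.

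I expect the main obstacle to lie entirely in the equality analysis rather than the bound itself, which is routine once the comparison matrix $B'$ is written down. The delicate points are (i) arguing cleanly that equality propagates back to $\lambda(B)=\lambda(S)$, and (ii) deploying the correct strict form of Perron--Frobenius—that an irreducible nonnegative matrix strictly exceeds the spectral radius of every proper principal submatrix—which is precisely what collapses $T(G,u)$ to a star and pins $G$ down. I would also verify the degenerate case $\Delta = 1$ (so $G = K_2 = K_{1,1}$) separately to confirm both the formula and the star characterization remain valid there.
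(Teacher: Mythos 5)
Your proof is correct, and it takes a genuinely different route from the paper's. The paper proves the two bounds separately, each with its own machinery: for $\Delta(G)+1$ it applies Corollary \ref{geqsignless} (namely $\lambda(\mathscr{L\hspace{-0.7mm}M}(G,x))\geq\lambda\big(L(T(G,u))\big)$, with equality exactly for trees) together with the classical fact $\lambda(L(\mathnormal{\Gamma}))\geq\mathnormal{\Delta}(\mathnormal{\Gamma})+1$; for $\delta(G)+\sqrt{\Delta(G)}$ it combines Theorem \ref{largestzero} with the Weyl inequality to get $\lambda\big(D_G(T(G,u))+A(T(G,u))\big)\geq\delta(G)+\lambda\big(A(T(G,u))\big)$ and then uses $\lambda(A(\mathnormal{\Gamma}))\geq\sqrt{\mathnormal{\Delta}(\mathnormal{\Gamma})}$ (equality exactly for stars); the equality characterization is then assembled from the equality cases of these quoted facts. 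You instead root the path-tree at a maximum-degree vertex and compare $B=D_G(T(G,u))+A(T(G,u))$ against a single explicit $(\Delta+1)\times(\Delta+1)$ matrix $B'$ whose Perron root you compute in closed form, and elementary algebra delivers both bounds simultaneously; your equality analysis rests on the strict decrease of the Perron root when passing to a proper principal submatrix of an irreducible nonnegative matrix, which is precisely the tool the paper itself deploys in the proof of Theorem \ref{multi}, so it is certainly available. Your route is more self-contained (no signless Laplacian comparison, no Weyl inequality, no external Laplacian eigenvalue bound) and in fact yields the stronger estimate $\lambda(\mathscr{L\hspace{-0.7mm}M}(G,x))\geq\tfrac{1}{2}\bigl(\Delta+\delta+\sqrt{(\Delta-\delta)^2+4\Delta}\bigr)$, which dominates $\max\{\Delta+1,\,\delta+\sqrt{\Delta}\}$; what the paper's route buys is economy, since Corollary \ref{geqsignless} and Theorem \ref{largestzero} are already in place and the equality conditions come for free from the cited results. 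One shared caveat: both arguments (your step ``$\delta\geq1$'' included) implicitly assume $|V(G)|\geq2$, since the stated bound fails for the one-vertex graph; this is an implicit hypothesis of the statement itself, not a defect of your proof.
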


\begin{proof}
Let    $u\in V(G)$ be of   degree $\mathnormal{\Delta}(G)$. Indeed,   $d_{T(G, u)}(u)=d_G(u)$ and  therefore
$\mathnormal{\Delta}(T(G, u))=\mathnormal{\Delta}(G)$.
For each  connected graph $\mathnormal{\Gamma}$,    Proposition 3.9.3  of    \cite{Brouwer} states  that
$\lambda(L(\mathnormal{\Gamma}))\geq\mathnormal{\Delta}(\mathnormal{\Gamma})+1$
with the equality holds if and only if $\mathnormal{\Delta}(\mathnormal{\Gamma})=|V(\mathnormal{\Gamma})|-1$.
By this fact and     Corollary \ref{geqsignless}, we obtain that
$\lambda(\mathscr{L\hspace{-0.7mm}M}(G,x))\geq\lambda(L(T(G, u)))\geq\mathnormal{\Delta}(T(G, u))+1=\mathnormal{\Delta}(G)+1$, and moreover,
the equality $\lambda(\mathscr{L\hspace{-0.7mm}M}(G,x))=\mathnormal{\Delta}(G)+1$ holds  if and only if $G$ is a star.

For each  connected graph $\mathnormal{\Gamma}$,     the Perron--Frobenius theorem  \cite[Theorem 2.2.1]{Brouwer} implies that
$\lambda(A(\mathnormal{\Gamma}))\geq\sqrt{\mathnormal{\Delta}(\mathnormal{\Gamma})}$ with the equality holds if and only if $\mathnormal{\Gamma}                                          $ is a star.
Using this fact, Theorem  \ref{largestzero},  and  the Weyl   inequality  \cite[Theorem 2.8.1]{Brouwer}, we  derive
\begin{align}\label{nostar}\nonumber\lambda\big(\mathscr{L\hspace{-0.7mm}M}(G, x)\big)&=\lambda\Big(D_G\big(T(G, u)\big)+A\big(T(G, u)\big)\Big)\\\nonumber
&\geq\delta(G)+\lambda\Big(A\big(T(G, u)\big)\Big)\\
&\geq\delta(G)+\sqrt{\mathnormal{\Delta}\big(T(G, u)\big)}\\\nonumber
&=\delta(G)+\sqrt{\mathnormal{\Delta}(G)}.
\end{align}
Suppose that  the equality $\lambda(\mathscr{L\hspace{-0.7mm}M}(G, x))=\delta(G)+\sqrt{\mathnormal{\Delta}(G)}$ holds. So, the equality  in      \eqref{nostar}  is attained,  and thus, $T(G, u)$ is a star. This implies that $G$ is a star, and then,    $\lambda(\mathscr{L\hspace{-0.7mm}M}(G, x))=\delta(G)+\sqrt{\mathnormal{\Delta}(G)}$ forces that $|V(G)|\leq2$. Since  the equality $\lambda(\mathscr{L\hspace{-0.7mm}M}(G, x))=\delta(G)+\sqrt{\mathnormal{\Delta}(G)}$ is valid  for the stars   $G$ on at most $2$  vertices, the proof is complete.
\end{proof}

In the  following   theorem, we   establish     \eqref{Lmatchingthm1} which   slightly  improves
the second statement of Theorem 2.6 of  \cite{Ali}.

\begin{theorem}\label{deltacosx}
Let  $G$ be a  connected    graph with  $\mathnormal{\Delta}(G)\geq2$ and let  $\ell(G)$ be   the length of the longest path in $G$.
Then,
\begin{equation}\label{asserx}\lambda\big(\mathscr{L\hspace{-0.7mm}M}(G, x)\big)\leq\mathnormal{\Delta(G)}+2\sqrt{\mathnormal{\Delta(G)}-1}\cos\frac{\pi}{2\ell(G)+2}\end{equation}
with the   equality holds if and only  if   $G$ is a    cycle.
\end{theorem}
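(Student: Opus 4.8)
The plan is to route everything through the symmetric matrix $M:=D_G(T)+A(T)$ with $T:=T(G,u)$ supplied by Theorem \ref{largestzero}, which gives $\lambda(\mathscr{L\hspace{-0.7mm}M}(G,x))=\lambda(M)$. Since $d_G(\upsilon(P))\le\mathnormal{\Delta}(G)$ for every $P\in V(T)$, the diagonal matrix $\mathnormal{\Delta}(G)I-D_G(T)$ is positive semidefinite, so monotonicity of the largest eigenvalue (the relevant case of the Weyl inequality \cite[Theorem 2.8.1]{Brouwer}) yields
\[
\lambda(M)\le\lambda\big(\mathnormal{\Delta}(G)I+A(T)\big)=\mathnormal{\Delta}(G)+\lambda\big(A(T)\big).
\]
Thus it suffices to bound $\lambda(A(T))$, where $T$ is a tree with $\mathnormal{\Delta}(T)\le\mathnormal{\Delta}(G)$ whose longest path (its diameter) has length at most $2\ell(G)$: the depth of $T$ rooted at $u$ equals the length of a longest path of $G$ starting at $u$, hence is at most $\ell(G)$, and the diameter of a rooted tree is at most twice its depth.

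The technical heart is a tree estimate that I would isolate as a lemma: if $T$ is a tree with $\mathnormal{\Delta}(T)\le\mathnormal{\Delta}$, $\mathnormal{\Delta}\ge2$, and longest path of length $L$, then $\lambda(A(T))\le2\sqrt{\mathnormal{\Delta}-1}\cos\frac{\pi}{L+2}$. I would prove it through the Collatz--Wielandt form of the Perron--Frobenius theorem \cite[Theorem 2.2.1]{Brouwer}: it is enough to produce a positive vector $y$ with $(A(T)y)_P\le\rho\,y_P$ at every vertex, where $\rho=2\sqrt{\mathnormal{\Delta}-1}\cos\theta$ and $\theta=\frac{\pi}{L+2}$. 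Root $T$ at a \emph{leaf} $r$, set $h(P)=\operatorname{dist}(r,P)\le L$, and put $y_P=(\mathnormal{\Delta}-1)^{-h(P)/2}\sin\big((h(P)+1)\theta\big)>0$. Writing $s_j=\sin(j\theta)$, the recurrence $s_{j-1}+s_{j+1}=2\cos\theta\,s_j$ (valid for $1\le j\le L+1$ under the convention $s_{L+2}=0$) together with the degree bounds reduces the required inequality, after dividing out $(\mathnormal{\Delta}-1)^{-h(P)/2}$, to $c\le\mathnormal{\Delta}-1$ at each non-root vertex (where $c$ is its number of children) and to $1\le\mathnormal{\Delta}-1$ at the root leaf; both hold precisely because $\mathnormal{\Delta}\ge2$. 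Combining this with the first paragraph and the monotonicity of $\cos$ on $[0,\pi/2]$ (using $L\le2\ell(G)$, so $\cos\frac{\pi}{L+2}\le\cos\frac{\pi}{2\ell(G)+2}$) gives the inequality \eqref{asserx}.

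For the equality analysis, assume \eqref{asserx} is an equality for a fixed $u$; then both inequalities above are tight. Tightness in the Weyl step forces $\big(\mathnormal{\Delta}(G)I-D_G(T)\big)\phi=0$ for the strictly positive Perron eigenvector $\phi$ of $M$, hence $d_G(\upsilon(P))=\mathnormal{\Delta}(G)$ for every $P$; as each vertex of $G$ is the terminal vertex of some path from $u$, the graph $G$ is $\mathnormal{\Delta}(G)$-regular. Tightness in the lemma forces the test vector $y$ to be the Perron eigenvector of $A(T)$: pairing $\rho y-A(T)y\ge0$ with a positive left Perron vector shows this difference vanishes, so every scalar inequality above is an equality; in particular the root-leaf relation $1=\mathnormal{\Delta}-1$ forces $\mathnormal{\Delta}=\mathnormal{\Delta}(G)=2$. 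A connected $2$-regular graph is a cycle. The converse is a direct check: for $G=C_n$ one has $S(C_n)=C_{2n}$, so by Corollary \ref{XrootrelationX} the largest root of $\mathscr{L\hspace{-0.7mm}M}(C_n,x)$ is the square of the largest matching root of $C_{2n}$, which makes \eqref{asserx} an equality (equivalently, $T(C_n,u)$ is a path on $2n-1$ vertices for which the test vector above is exactly the Perron eigenvector). Hence equality holds exactly for cycles.

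The step I expect to be the main obstacle is the lemma itself: selecting the test vector so that the local inequality holds at every vertex while its equality conditions remain sharp enough to pin the extremal case down to $\mathnormal{\Delta}=2$. Rooting at a leaf rather than at an arbitrary vertex is what makes this work — at a vertex of degree $\mathnormal{\Delta}$ the analogous estimate would read $\mathnormal{\Delta}\le\mathnormal{\Delta}-1$ and fail — and it is precisely the leaf-root inequality $1\le\mathnormal{\Delta}-1$ that encodes the characterization of cycles as the extremal graphs.
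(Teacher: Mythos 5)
Your proof is correct, and it takes a genuinely different route from the paper in the key technical step. Both arguments share the same skeleton: Theorem \ref{largestzero} reduces everything to $\lambda\big(D_G(T)+A(T)\big)$ for $T=T(G,u)$, and the Weyl inequality splits off the diagonal to leave $\Delta(G)+\lambda(A(T))$. From there the paper embeds $T(G,u)$ as a subgraph of the Bethe tree $B_{\Delta-1,2\ell+1}$ and imports the Rojo--Robbiano formula $\lambda\big(A(B_{d,k})\big)=2\sqrt{d}\cos\frac{\pi}{k+1}$ together with eigenvalue monotonicity, whereas you prove the needed tree estimate $\lambda(A(T))\le 2\sqrt{\Delta-1}\cos\frac{\pi}{L+2}$ from scratch by a Collatz--Wielandt test vector $y_P=(\Delta-1)^{-h(P)/2}\sin\big((h(P)+1)\theta\big)$ rooted at a leaf; I checked the local verification (the non-root inequality reduces to $c\le\Delta-1$, the root-leaf one to $1\le\Delta-1$, with $\sin$-recurrence handling the rest), and it is sound, as are the auxiliary facts $\Delta(T(G,u))\le\Delta(G)$ and $L\le 2\ell(G)$. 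The equality analyses also differ: the paper argues via Perron--Frobenius that equality forces $T(G,u)\cong B_{\Delta-1,2\ell+1}$ and then asserts rather tersely that $G$ must be a cycle, while you extract $\Delta(G)$-regularity of $G$ from tightness in the Weyl step (pairing with the positive Perron vector of $D_G(T)+A(T)$) and force $\Delta(G)=2$ from tightness of the root-leaf relation, so that connected plus $2$-regular gives a cycle directly. What each approach buys: the paper's proof is shorter but leans on an external eigenvalue formula and leaves the extremal identification partly to the reader; yours is longer but self-contained, and the characterization of cycles as the unique extremal graphs falls out transparently from explicit equality conditions. One cosmetic remark: no ``convention'' $s_{L+2}=0$ is needed, since $\sin\big((L+2)\theta\big)=\sin\pi=0$ holds automatically for your choice of $\theta$.
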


\begin{proof}
For simplicity, let $\mathnormal{\Delta}=\mathnormal{\Delta}(G)$ and $\ell=\ell(G)$.
For every positive integers $d$ and  $k\geq2$, the  {\it Bethe tree $B_{d, k}$} is a rooted  tree  with  $k$ levels in which  the    root vertex  is of    degree $d$, the vertices on  levels $2, \ldots,  k-1$  are of  degree $d+1$,   and the vertices on  level $k$ are of degree $1$.
By    Theorem 7 of \cite{Rojo},
\begin{equation}\label{uppereqq}
\lambda\big(A(B_{d, k})\big)=2\sqrt{d}\cos\frac{\pi}{k+1}.
\end{equation}
Let  $u\in V(G)$.
It is not hard to check that $T(G, u)$ is isomorphic to a  subgraph of $B_{\mathnormal{\Delta}-1, 2\ell+1}$.
For this, it is enough to correspond $u\in V(T(G, u))$ to  an arbitrary   vertex on  level $\ell+1$ in $B_{\mathnormal{\Delta}-1, 2\ell+1}$.
By applying  Theorem \ref{largestzero},   the Weyl   inequality  \cite[Theorem 2.8.1]{Brouwer},  the interlacing theorem \cite[Corollary 2.5.2]{Brouwer},   and \eqref{uppereqq}, we derive
\begin{align}\label{31decx}\nonumber\lambda\big(\mathscr{L\hspace{-0.7mm}M}(G, x)\big)&=\lambda\Big(D_G\big(T(G, u)\big)+A\big(T(G, u)\big)\Big)\\&\nonumber
\leq\lambda\Big(D_G\big(T(G, u)\big)\Big)+\lambda\Big(A\big(T(G, u)\big)\Big)\\&
\leq\mathnormal{\Delta}+\lambda\big(A(B_{\mathnormal{\Delta}-1,2\ell+1})\big)\\&\nonumber
=\mathnormal{\Delta}+2\sqrt{\mathnormal{\Delta}-1}\cos\frac{\pi}{2\ell+2},\end{align}
proving \eqref{asserx}.
Now, assume that the equality in \eqref{asserx} is achieved. Therefore,  the equality in \eqref{31decx} occurs,   and thus,
the Perron--Frobenius theorem  \cite[Theorem 2.2.1]{Brouwer} implies that   $T(G, u)$ is isomorphic to $B_{\mathnormal{\Delta}-1, 2\ell+1}$. Since  $\mathnormal{\Delta}\geq2$,  one   can easily  obtain   that   $G$ is a cycle.
Conversely, if $G$ is a cycle, then $T(G, u)$ is a  path   on $2\ell+1$ vertices.
By Theorem \ref{largestzero} and \eqref{uppereqq}, we get
$$\lambda\big(\mathscr{L\hspace{-0.7mm}M}(G, x)\big)=
\lambda\Big(D_G\big(T(G, u)\big)+A\big(T(G, u)\big)\Big)
=2+\lambda\big(A(B_{1, 2\ell+1})\big)
=2+2\cos\frac{\pi}{2\ell+2}.$$
This   completes the proof.
\end{proof}

Stevanovi\'{c} \cite{D} proved  that the  eigenvalues  of the adjacency matrix  of a tree $T$ are  less than $2\sqrt{\mathnormal{\Delta}(T)-1}$.
The corollary    below   gives an improvement of this upper bound  for  the  subdivision of trees.

\begin{corollary}
Let $G$ be a   graph with  $\mathnormal{\Delta}(G)\geq2$. Then
\begin{equation}\label{imx}\lambda\Big(\mathscr{M}\big(S(G), x)\big)\Big)<1+\sqrt{\mathnormal{\Delta}(G)-1}.\end{equation}
In particular, if $F$ is a forest  with  $\mathnormal{\Delta}(F)\geq2$, then $\lambda(A(S(F)))<1+\sqrt{\mathnormal{\Delta}(F)-1}$.
\end{corollary}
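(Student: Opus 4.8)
The plan is to convert the statement about $\lambda(\mathscr{M}(S(G),x))$ into a statement about $\lambda(\mathscr{L\hspace{-0.7mm}M}(G,x))$ and then feed it into Theorem \ref{deltacosx}. First I would record, using Corollary \ref{XrootrelationX}, that
$$\mathscr{M}\big(S(G),x\big)=x^{|E(G)|-|V(G)|}\mathscr{L\hspace{-0.7mm}M}(G,x^2).$$
Since the zeros of $\mathscr{L\hspace{-0.7mm}M}(G,x)$ are nonnegative real numbers (again by Corollary \ref{XrootrelationX}), the nonzero zeros of $\mathscr{M}(S(G),x)$ are exactly the numbers $\pm\sqrt{r}$ as $r$ runs over the positive zeros of $\mathscr{L\hspace{-0.7mm}M}(G,x)$. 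In particular the largest zero satisfies
$$\lambda\big(\mathscr{M}(S(G),x)\big)=\sqrt{\lambda\big(\mathscr{L\hspace{-0.7mm}M}(G,x)\big)},$$
which reduces \eqref{imx} to bounding $\lambda(\mathscr{L\hspace{-0.7mm}M}(G,x))$ from above.

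Next, assuming $G$ is connected, I would invoke Theorem \ref{deltacosx}. Because $\ell(G)$ is a finite positive integer, the angle $\tfrac{\pi}{2\ell(G)+2}$ lies strictly between $0$ and $\tfrac{\pi}{2}$, so $\cos\tfrac{\pi}{2\ell(G)+2}<1$, and \eqref{asserx} upgrades to the strict bound
$$\lambda\big(\mathscr{L\hspace{-0.7mm}M}(G,x)\big)<\mathnormal{\Delta}(G)+2\sqrt{\mathnormal{\Delta}(G)-1}.$$
Completing the square gives the exact identity $\mathnormal{\Delta}(G)+2\sqrt{\mathnormal{\Delta}(G)-1}=\big(1+\sqrt{\mathnormal{\Delta}(G)-1}\big)^2$, so taking square roots and combining with the first step yields
$$\lambda\big(\mathscr{M}(S(G),x)\big)=\sqrt{\lambda\big(\mathscr{L\hspace{-0.7mm}M}(G,x)\big)}<1+\sqrt{\mathnormal{\Delta}(G)-1},$$
which is precisely \eqref{imx} in the connected case.

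For a general (possibly disconnected) $G$ with $\mathnormal{\Delta}(G)\geq2$, I would reduce to the connected case: $\mathscr{M}(S(G),x)$ is the product of the polynomials $\mathscr{M}(S(G_i),x)$ over the connected components $G_i$ of $G$, so $\lambda(\mathscr{M}(S(G),x))$ equals the maximum of the $\lambda(\mathscr{M}(S(G_i),x))$. Each component with $\mathnormal{\Delta}(G_i)\geq2$ is bounded by the connected argument by $1+\sqrt{\mathnormal{\Delta}(G_i)-1}\leq 1+\sqrt{\mathnormal{\Delta}(G)-1}$, while the remaining components are single vertices or single edges, whose subdivisions have largest zero $0$ or $\sqrt{2}$, both strictly below $1+\sqrt{\mathnormal{\Delta}(G)-1}\geq2$. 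Finally, the `in particular' statement follows since the subdivision $S(F)$ of a forest $F$ is again a forest, so Theorem \ref{treeequal} gives $\mathscr{M}(S(F),x)=\varphi(A(S(F)),x)$ and hence $\lambda(A(S(F)))=\lambda(\mathscr{M}(S(F),x))<1+\sqrt{\mathnormal{\Delta}(F)-1}$.

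The argument is mostly bookkeeping once Theorem \ref{deltacosx} is available; the only genuinely delicate points are securing the \emph{strict} inequality, which rests on the elementary observation $\cos\tfrac{\pi}{2\ell(G)+2}<1$ together with the square factorization above, and checking that the small-degree boundary components in the disconnected reduction do not violate the bound. I expect these boundary components to be the main place where a little care is needed, but they are easily dispatched by the explicit values $0$ and $\sqrt{2}$.
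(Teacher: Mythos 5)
Your proof is correct and takes essentially the same route as the paper: combine Theorem \ref{deltacosx} (the strict inequality coming from $\cos\tfrac{\pi}{2\ell(G)+2}<1$) with the relation $\lambda\big(\mathscr{M}(S(G),x)\big)=\sqrt{\lambda\big(\mathscr{L\hspace{-0.7mm}M}(G,x)\big)}$ from Corollary \ref{XrootrelationX} and the identity $\mathnormal{\Delta}+2\sqrt{\mathnormal{\Delta}-1}=\big(1+\sqrt{\mathnormal{\Delta}-1}\big)^2$, then pass to forests via Theorem \ref{treeequal}. One point in your favor: Theorem \ref{deltacosx} is stated only for \emph{connected} graphs, and your component-by-component reduction (including the explicit $0$ and $\sqrt{2}$ checks for isolated vertices and single edges) supplies a step that the paper's proof passes over silently.
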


\begin{proof}
It follows from  Theorem \ref{deltacosx} that    $\lambda(\mathscr{L\hspace{-0.7mm}M}(G, x))<\mathnormal{\Delta}(G)+2\sqrt{\mathnormal{\Delta}(G)-1}$.
Moreover,  it   follows from  Corollary   \ref{XrootrelationX}  that   $\lambda(\mathscr{M}(S(G), x))=\sqrt{\lambda(\mathscr{L\hspace{-0.7mm}M}(G, x))}$. From these, we find that    $$\lambda(\mathscr{M}(S(G), x))<\sqrt{\mathnormal{\Delta}(G)+2\sqrt{\mathnormal{\Delta}(G)-1}}=1+\sqrt{\mathnormal{\Delta}(G)-1},$$
proving   \eqref{imx}.
As   the subdivision of a  forest   is    a forest, the `in particular' statement     follows     from Theorem \ref{treeequal} and  \eqref{imx}.
\end{proof}

\begin{remark}
Note that  $\mathnormal{\Delta}(S(G))=\mathnormal{\Delta}(G)$ for every   graph $G$ with $\mathnormal{\Delta}(G)\geq2$. So,
for the subdivision of a  graph with the maximum degree at least $2$, the upper bound which   appears  in  \eqref{imx} is sharper  than the    upper bound   that  comes from    \eqref{thmmatching1}.
\end{remark}

We demonstrated   in Theorem  \ref{largestzero}  that the  largest zero   of    the Laplacian matching polynomial  has the  multiplicity   $1$.
In the following theorem, we   prove the remaining statements of  \eqref{Lmatchingthm3} as   analogues   of   the results given in \eqref{thmmatching2}.

\begin{theorem}\label{multi}
Let  $G$ be a     graph and let  $n=|V(G)|$.
For each edge $e\in E(G)$,  the zeros of  $\mathscr{L\hspace{-0.7mm}M}(G,x)$ and $\mathscr{L\hspace{-0.7mm}M}(G-e,x)$  interlace  in the sense that,  if     $\alpha_1\leq\cdots\leq\alpha_n$ and
$\beta_1\leq\cdots\leq\beta_n$ are respectively   the zeros  of  $\mathscr{L\hspace{-0.7mm}M}(G,x)$ and $\mathscr{L\hspace{-0.7mm}M}(G-e,x)$,  then $\beta_1\leq\alpha_1\leq\beta_2\leq\alpha_2\leq\cdots\leq\beta_n\leq\alpha_n$. Also, if $G$ is connected, then   $\lambda(\mathscr{L\hspace{-0.7mm}M}(G, x))>\lambda(\mathscr{L\hspace{-0.7mm}M}(H, x))$  for any  proper subgraph $H$ of $G$.
\end{theorem}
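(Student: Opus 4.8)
The plan is to prove the two assertions by different routes: the interlacing of $\mathscr{L\hspace{-0.7mm}M}(G,x)$ and $\mathscr{L\hspace{-0.7mm}M}(G-e,x)$ through the subdivision identity of Corollary \ref{XrootrelationX}, and the strict domination of the largest zero through the path-tree matrices of Theorem \ref{largestzero}.

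For the interlacing, the first observation is that $S(G-e)=S(G)-\upsilon_e$: deleting the subdivision vertex $\upsilon_e$ from $S(G)$ removes exactly the two edges joining $\upsilon_e$ to the endpoints of $e$, which produces the subdivision of $G-e$. Hence \eqref{thmmatching2} applies to $S(G)$ and its vertex $\upsilon_e$, so the zeros of $\mathscr{M}(S(G-e),x)=\mathscr{M}(S(G)-\upsilon_e,x)$ interlace those of $\mathscr{M}(S(G),x)$. Since the subdivision of any graph is bipartite, both of these matching polynomials have root multisets symmetric about $0$, and by Corollary \ref{XrootrelationX} their positive zeros are precisely the numbers $\sqrt{\alpha_i}$ with $\alpha_i>0$ and $\sqrt{\beta_i}$ with $\beta_i>0$. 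I would then restrict the symmetric interlacing to the positive half-line and apply the increasing bijection $t\mapsto t^2$; reading the interlaced chain from the top downwards converts it into $\alpha_n\ge\beta_n\ge\alpha_{n-1}\ge\beta_{n-1}\ge\cdots$, which is exactly $\beta_i\le\alpha_i\le\beta_{i+1}$ over the range of indices corresponding to positive zeros.

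The delicate point, which I expect to be the main obstacle in this half, is to extend this chain across the zero root. Let $k$ and $k'$ denote the multiplicities of $0$ in $\mathscr{L\hspace{-0.7mm}M}(G,x)$ and $\mathscr{L\hspace{-0.7mm}M}(G-e,x)$. Because $\mathscr{L\hspace{-0.7mm}M}$ is multiplicative over connected components and, by Theorem \ref{treeequal} together with the preceding results, $0$ is a root of the Laplacian matching polynomial of a connected graph exactly when that graph is a tree and then simply, the multiplicity of $0$ equals the number of tree components. A short case analysis on whether $e$ is a bridge shows that deleting one edge raises the number of tree components by $0$ or $1$, so $k\le k'\le k+1$. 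Using $\alpha_i,\beta_i\ge0$ this is precisely what is needed: whenever $\alpha_i=0$ one has $i\le k\le k'$, forcing $\beta_i=0$, and the at most one additional zero of $\mathscr{L\hspace{-0.7mm}M}(G-e,x)$ lies below the smallest positive zero of $\mathscr{L\hspace{-0.7mm}M}(G,x)$; this closes the full chain $\beta_1\le\alpha_1\le\cdots\le\beta_n\le\alpha_n$.

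For the strict comparison of largest zeros, by multiplicativity over components it suffices to assume $H$ is a connected proper subgraph and to fix $u\in V(H)$. Every path in $H$ starting at $u$ is also such a path in $G$, so $T(H,u)$ is an induced subtree of $T(G,u)$, whence $D_G(T(H,u))+A(T(H,u))$ is a principal submatrix of $M_G:=D_G(T(G,u))+A(T(G,u))$. Since $d_H(v)\le d_G(v)$ for every $v\in V(H)$, Theorem \ref{largestzero} gives
\[
\lambda\big(\mathscr{L\hspace{-0.7mm}M}(H,x)\big)=\lambda\Big(D_H\big(T(H,u)\big)+A\big(T(H,u)\big)\Big)\le\lambda\Big(D_G\big(T(H,u)\big)+A\big(T(H,u)\big)\Big)\le\lambda(M_G)=\lambda\big(\mathscr{L\hspace{-0.7mm}M}(G,x)\big),
\]
the first inequality by monotonicity of the spectral radius under an entrywise increase of the diagonal and the second because the spectral radius of a nonnegative matrix dominates that of any principal submatrix. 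To upgrade the domination to a strict inequality, I would verify that $H\subsetneq G$ forces $T(H,u)\subsetneq T(G,u)$, since otherwise every path from $u$ in $G$ would be realizable in $H$ and $H$ would then contain every vertex and every edge of $G$. As $G$ is connected, $M_G$ is irreducible and nonnegative, so Perron--Frobenius makes its spectral radius strictly larger than that of any proper principal submatrix, yielding $\lambda(\mathscr{L\hspace{-0.7mm}M}(G,x))>\lambda(\mathscr{L\hspace{-0.7mm}M}(H,x))$. Here the subtlety to keep in view is the mismatch between the degree vectors $\bmi{d}_H$ and $\bmi{d}_{G,H}$, which the entrywise-monotonicity step is designed to absorb.
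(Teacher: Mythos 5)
Your proposal is correct, and its overall route coincides with the paper's: for the interlacing it uses $S(G-e)=S(G)-\upsilon_e$, the vertex-deletion interlacing \eqref{thmmatching2}, and Corollary \ref{XrootrelationX}; for the strict domination of the largest zero it uses Theorem \ref{largestzero}, principal submatrices of $D_G(T(G,u))+A(T(G,u))$, and Perron--Frobenius. The one genuine divergence is the step you single out as the delicate point. The paper avoids any analysis of the root $0$ by a cleaner alignment: every zero of $\mathscr{M}(S(G),x)$ other than the $\sqrt{\alpha_i}$ is either $0$ or of the form $-\sqrt{\alpha_i}$, so the last $n$ terms of the nondescending sequence of \emph{all} zeros of $\mathscr{M}(S(G),x)$ are exactly $\sqrt{\alpha_1}\leq\cdots\leq\sqrt{\alpha_n}$ (zero entries included), and likewise the last $n$ terms for $\mathscr{M}(S(G)-\upsilon_e,x)$ are $\sqrt{\beta_1}\leq\cdots\leq\sqrt{\beta_n}$. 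Since these two blocks are aligned at the common top index, the full-sequence interlacing from \eqref{thmmatching2} reads off directly as $\sqrt{\beta_i}\leq\sqrt{\alpha_i}\leq\sqrt{\beta_{i+1}}$, and squaring finishes the argument with no case distinction on multiplicities, tree components, or bridges. Your patch across the zero root is nevertheless sound: the multiplicity of $0$ in $\mathscr{L\hspace{-0.7mm}M}(\cdot,x)$ equals the number of tree components (by the Chen--Zhang theorem and the corollary to Theorem \ref{treeequal}), and your bridge analysis giving $k\leq k'\leq k+1$ is right; it just costs an extra structural lemma that the paper's bookkeeping renders unnecessary. (Incidentally, the symmetry of the matching-polynomial roots needs no appeal to bipartiteness: $\mathscr{M}(H,x)$ is, up to sign, an even or odd polynomial for every graph $H$.) On the second assertion your argument is essentially identical to the paper's, with two small differences that, if anything, tighten it: you reduce explicitly to connected $H$, which the paper leaves implicit even though Theorem \ref{largestzero} requires connectedness, and you obtain strictness from the proper-principal-submatrix step alone, whereas the paper asserts strict inequalities both there and at the diagonal-increase step $R>D_H(T(H,u))+A(T(H,u))$; either single strictness suffices.
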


\begin{proof}
Fix  an edge    $e\in E(G)$ and  denote by    $\upsilon_e$     the vertex of $S(G)$  corresponding to   $e$.
Let  $\alpha_1\leq\cdots\leq\alpha_n$ and
$\beta_1\leq\cdots\leq\beta_n$ be    the zeros  of  $\mathscr{L\hspace{-0.7mm}M}(G,x)$ and $\mathscr{L\hspace{-0.7mm}M}(G-e,x)$, respectively.
Corollary \ref{XrootrelationX} yields that
$\sqrt{\alpha_1}\leq\cdots\leq\sqrt{\alpha_n}$ is the  end part  of   the  nondescending sequence which  consists  of all the  zeros of $\mathscr{M}(S(G), x)$ and
$\sqrt{\beta_1}\leq\cdots\leq\sqrt{\beta_n}$ is the  end part  of   the  nondescending sequence  which  consists of all the  zeros of $\mathscr{M}(S(G-e), x)$.
As $S(G-e)=S(G)-\upsilon_e$, it follows from   \eqref{thmmatching2} that    the zeros of $\mathscr{M}(S(G), x)$ and $\mathscr{M}(S(G-e), x)$  interlace. So, we find that
$$\sqrt{\beta_1}\leq\sqrt{\alpha_1}\leq\sqrt{\beta_2}\leq\sqrt{\alpha_2}\leq\cdots\leq\sqrt{\beta_n}\leq\sqrt{\alpha_n}$$ which means that
$\beta_1\leq\alpha_1\leq\beta_2\leq\alpha_2\leq\cdots\leq\beta_n\leq\alpha_n$, as desired.

Now, assume that $G$ is connected.
Let $H$ be a proper subgraph of $G$ and  let   $u\in V(H)$.
As     $T(H, u)$ is a proper subgraph of   $T(G, u)$,
if  $R$ denotes   the  submatrix of $D_G(T(G, u))+A(T(G, u))$ corresponding to  the  vertices  in $V(T(H, u))$, then
$R-(D_H(T(H, u))+A(T(H, u)))$ is  a nonzero matrix with   nonnegative entries.
So,  by   applying    Theorem \ref{largestzero} and the Perron--Frobenius theorem  \cite[Theorem 2.2.1]{Brouwer},    we get
\begin{align*}\lambda\big(\mathscr{L\hspace{-0.7mm}M}(G, x)\big)&=\lambda\Big(D_G\big(T(G, u)\big)+A\big(T(G, u)\big)\Big)\\&>\lambda(R)\\&>\lambda\Big(D_H\big(T(H, u)\big)+A\big(T(H, u)\big)\Big)\\&=\lambda\big(\mathscr{L\hspace{-0.7mm}M}(H, x)\big).\qedhere\end{align*}
\end{proof}

\begin{remark}\label{ghor}
For every graph $G$ and  real number $\alpha$, let  $m_G(\alpha)$ denote  the multiplicity of $\alpha$  as a root of $\mathscr{L\hspace{-0.7mm}M}(G, x)$.
As a consequence of  Theorem  \ref{multi}, we have  $|m_G(\alpha)-m_{G-e}(\alpha)|\leq1$ for each edge  $e\in E(G)$.
\end{remark}

It is known  that among all trees with a fixed number of vertices the path  has
the smallest value of the largest  Laplacian eigenvalue \cite{pet}.
The following result
can be considered as an analogue of this  fact and
is  obtained from    Theorems \ref{treeequal} and \ref{multi}.

\begin{corollary}
Let $P_n$ and $K_n$ be the path  and complete graph  on $n$ vertices, respectively. For  any
connected graph $G$  on $n$ vertices which is not  $P_n$ and  $K_n$,
$$\lambda\big(\mathscr{L\hspace{-0.7mm}M}(P_n, x)\big)<\lambda\big(\mathscr{L\hspace{-0.7mm}M}(G, x)\big)<\lambda\big(\mathscr{L\hspace{-0.7mm}M}(K_n, x)\big).$$
\end{corollary}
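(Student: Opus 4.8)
The plan is to prove the two inequalities separately, and in each case to reduce the claim to the subgraph monotonicity of the largest Laplacian matching root that was already established in Theorem~\ref{multi}. The key structural facts I would exploit are that the path $P_n$ is a subgraph of every connected graph $G$ on $n$ vertices (since a connected graph on $n$ vertices contains a spanning tree, and every tree contains a path, though a spanning path need not exist in general) and that every graph $G$ on $n$ vertices is a subgraph of $K_n$. The second containment is immediate and handles the upper bound with essentially no work; the first containment is the delicate one and will be the main obstacle.

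For the upper bound, I would argue as follows. If $G$ is a connected graph on $n$ vertices with $G\neq K_n$, then $G$ is a proper subgraph of $K_n$. Since $K_n$ is connected, Theorem~\ref{multi} applied with the pair $(K_n,G)$ gives immediately $\lambda(\mathscr{L\hspace{-0.7mm}M}(G,x))<\lambda(\mathscr{L\hspace{-0.7mm}M}(K_n,x))$. This direction requires nothing beyond quoting the theorem.

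For the lower bound I want $\lambda(\mathscr{L\hspace{-0.7mm}M}(P_n,x))<\lambda(\mathscr{L\hspace{-0.7mm}M}(G,x))$ whenever $G\neq P_n$. The natural route is again Theorem~\ref{multi}, but that theorem compares $G$ with its \emph{proper subgraphs}, whereas here I need $P_n$ to sit \emph{inside} $G$. The obstacle is that $P_n$ is generally not a subgraph of $G$: a connected graph on $n$ vertices need not contain a Hamiltonian path. So a direct application of Theorem~\ref{multi} to the pair $(G,P_n)$ is not available, and I expect this to be the crux of the argument. I would instead compare both sides to the minimum over the relevant class, or more profitably invoke the characterization from Corollary~\ref{geqsignless} and Theorem~\ref{largestzero} recasting $\lambda(\mathscr{L\hspace{-0.7mm}M}(G,x))$ as $\lambda(D_G(T(G,u))+A(T(G,u)))$. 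The cleanest workaround is to use that $\lambda(\mathscr{L\hspace{-0.7mm}M}(G,x))\geq\lambda(L(T(G,u)))$ from Corollary~\ref{geqsignless}, together with the cited fact from \cite{pet} that among trees on $n$ vertices the path minimizes the largest Laplacian eigenvalue, after verifying that $T(G,u)$ has at least $n$ vertices and comparing. Since $\mathscr{L\hspace{-0.7mm}M}(P_n,x)=\varphi(L(P_n),x)$ by Theorem~\ref{treeequal}, and $L(P_n)$ has largest eigenvalue $2+2\cos\tfrac{\pi}{2n}$ or the corresponding closed form, I would pin down $\lambda(\mathscr{L\hspace{-0.7mm}M}(P_n,x))$ explicitly and then show every other connected $G$ strictly exceeds it.

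The honest assessment is that the lower bound is where the real work lies, and the above sketch still needs the strictness to be nailed down for graphs that happen to be trees other than $P_n$ (handled directly by \cite{pet} and Theorem~\ref{treeequal}) versus graphs containing a cycle (where the extra edge forces a strict increase, and one can find a spanning tree $T$ with $G\supsetneq$ some proper subgraph realizing a path, applying Theorem~\ref{multi} along the chain $P_n\subsetneq\cdots\subsetneq G$ if such a chain of subgraphs exists). I expect the cleanest final writeup to split into the case where $G$ is a tree, settled by the path-minimizes-Laplacian result of \cite{pet} combined with $\mathscr{L\hspace{-0.7mm}M}(T,x)=\varphi(L(T),x)$, and the case where $G$ contains a cycle, settled by passing to a spanning tree $T$ of $G$, noting $P_n\neq T$ forces $\lambda(\mathscr{L\hspace{-0.7mm}M}(P_n,x))\leq\lambda(\mathscr{L\hspace{-0.7mm}M}(T,x))$ and then $T\subsetneq G$ forces $\lambda(\mathscr{L\hspace{-0.7mm}M}(T,x))<\lambda(\mathscr{L\hspace{-0.7mm}M}(G,x))$ by Theorem~\ref{multi}, with the first inequality being strict precisely when $T\neq P_n$.
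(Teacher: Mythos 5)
Your proposal is correct and, in its final form, is essentially the paper's own argument: the upper bound follows from Theorem~\ref{multi} applied to $G\subsetneq K_n$, and the lower bound splits into the case where $G$ is a tree (settled by Theorem~\ref{treeequal} together with the path-minimizes-the-largest-Laplacian-eigenvalue result of \cite{pet}) and the case where $G$ contains a cycle (settled by passing to a spanning tree $T\subsetneq G$ and chaining $\lambda\big(\mathscr{L\hspace{-0.7mm}M}(P_n,x)\big)\leq\lambda\big(\mathscr{L\hspace{-0.7mm}M}(T,x)\big)<\lambda\big(\mathscr{L\hspace{-0.7mm}M}(G,x)\big)$ via Theorem~\ref{multi}). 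The detour through Corollary~\ref{geqsignless} in your middle paragraph is unnecessary, and note the strictness of the combined lower bound does not require $T\neq P_n$, since the second inequality in the chain is already strict.
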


\section{Concluding remarks}

In this paper, we have discovered some properties of the location of zeros of the Laplacian matching polynomial. Most of our results can  be considered as  analogues of   known  results on the matching polynomial.
Comparing to the matching polynomial, the Laplacian matching polynomial contains not only the information of the sizes of   matchings in  the graph
but also the  vertex degrees    of the graph. Hence, it seems to be that more  structural properties
of  graphs  can be reflected by  the Laplacian matching polynomial rather than the
matching polynomial. For an  instance,   $0$ is a root of $\mathscr{L\hspace{-0.7mm}M}(G, x)$  if and only if $G$ is a
forest, in while  $0$ is  a root of $\mathscr{M}(G, x)$ if and only if $G$ has no prefect matchings.

More interesting facts about the Laplacian matching polynomial can be concerned in further. For example,  one may focus on    the   multiplicities  of   zeros  of the  Laplacian    matching polynomial  as    there are  many results on  the multiplicities  of zeros  of the   matching polynomial. In view of Remark \ref{ghor}, for every graph $G$ and  real number $\alpha$, one may divide   $E(G)$  into three subsets  based on how   the multiplicity of   $\alpha$  changes when an edge of $G$   is removed. The      corresponding problem   about the matching polynomial is  investigated  by   Chen  and Ku    \cite{Ku}. Also,  it is  a known result   that the   multiplicity of a zero  of the   matching polynomial  is at most the   path partition number  of the graph,  that is, the minimum number of vertex disjoint paths required to cover all the vertices of  the graph \cite[Theorem 6.4.5]{Godsil}.   It seems to be an interesting problem to find a sharp    upper bound   on  the   multiplicity of  a zero  of the  Laplacian    matching polynomial.

\end{document}